\theoremstyle{plain}
\numberwithin{equation}{section}
\theoremstyle{plain}
\newtheorem{lemma}[equation]{Lemma}
\newtheorem{thm}[equation]{Theorem}
\theoremstyle{definition}
\newcommand{\Z}{\operatorname{Z}}
\newcommand*{\rom}[1]{\expandafter\@slowromancap\romannumeral #1@}
\begin{document}
\title[On the probability distribution associated to commutator word map in finite groups \rom{2}]
{On the probability distribution associated to commutator word map in finite groups \rom{2}}
\author{Tushar Kanta Naik}
\address{Harish-Chandra Research Institute, HBNI \\
         Chhatnag Road, Jhunsi,
          Allahabad-211 019 \\
                India}
\email{mathematics67@gmail.com {\rm and } tusharkanta@hri.res.in}

\subjclass[2010]{$20D60, 20P05$}
\keywords{commuting probability, conjugate type, isoclinism, fiber}

\begin{abstract}
Let $P(G)$ denotes the set of sizes of fibers of non-trivial commutators of the commutator word map. Here, we prove that $|P(G)|=1$, for any finite group $G$ of nilpotency class $3$ with exactlly two conjugacy class sizes. We also show that for given $n\geq 1$, there exists a finite group $G$ of nilpotency class $2$ with exactlly two conjugacy class sizes such that $|P(G)|=n$.  
\end{abstract}
\maketitle

\section{Introduction}
Let $\mathbb{G}$ denote the family of all finite groups. Define $Pr:\mathbb{G}\rightarrow Q\cap(0,1]$ as follows:
$$ Pr(G) = \dfrac{\mid\{(x,\; y) \in G \times G \mid xy=yx \}\mid}{|G|^2},\;\;\; \mbox{for}\; G \in \mathbb{G}.$$
$Pr(G)$ is called the commuting probability of $G$. Various probability distribution associated to commuator word map has been subject of active research in recent years (see \cite{Das-Nath}, \cite{NY}, \cite{PS08}, \cite{Shalev}). In $2008$, Pournaki and Sobhani \cite{PS08} introduced the notion of $Pr_g(G)$, which is defined as follows:
$$ Pr_g(G) = \dfrac{|\{(x,\; y)\mid [x,\;y]=g\}|}{|G|^2},\;\; \mbox{for}\; g \in G.$$
For a group $G$ and elements $x$, $y\in G$, the commutator $[x,\; y]$ of $x$ and $y$ is defined by $x^{-1}y^{-1}xy$. Note that $Pr_1(G)=Pr(G)$, where $1$ denotes the identity element in the group. In \cite{PS08}, Pournaki and Sobhani computed $Pr_g(G)$ for finite groups $G$, which have only two different irreducible complex character degrees. They also obtained explicit formulas for $Pr_g(G)$, when $G$ is a finite group with $|G'|=p$, where $p$ is a prime integer. We denote by $G'$ the commuator subgroup of $G$.

A finite group $G$ is said to be of {\it conjugate type $(1,\; m)$}, for some integer $m > 1$, if for all non-central element $g$ of $G,\;$ $|g^G|= m$. Here $g^G$ denotes the conjugacy class of $g$ in $G$ and is defined as $g^G:= \{h^{-1}gh \mid h\in G\}$. N.~Ito \cite{Ito53} proved that, if $G$ is of conjugate type $(1,\; m)$, then $m=p^k$, for some prime $p$ and integer $k\geq 1$. Nath and Yadav \cite {NY} studied $Pr_g(G)$, when $G$ is either of conjugate type $(1,\; p^n)$ or a Camina $p$-group. A finite group $G$ is said to be a {\it Camina group}, if $[g,\; G] = G'$, for all $g\in G\setminus G'$. Here $[g,\; G]$ denotes the set $\{[g,\; h] \mid h\in G\}$. Note that $g[g,\;G]\;=\;g^G$.

We denote by $K(G)$ the set $\{[x,\; y] \mid (x,\;y)\in G\times G\}$. For $g\in K(G)$, we define $fiber$ of $g$ as follows:
$$fiber(g):=\{(x,\;y)\in G\times G \mid [x,\; y] = g\}.$$
Note that, formula for $Pr_g(G)$ can be re-written as follows:
\begin{equation*}
Pr_g(G) = 
\begin{cases}
\dfrac{|fiber(g)|}{|G|^2}, & \text{if}\ g\in K(G)\\
0, & \text{otherwise.}
\end{cases}
\end{equation*}

Following the notation used by Nath and Yadav \cite{NY}, we recall the notion of $P(G)$, which is defined as follows:
$$P(G)=\{Pr_g(G) \mid 1\neq g \in K(G)\}.$$
Note that
\begin{equation}\label{eqn1}
|P(G)| = |\{|fiber(g)| \mid 1\neq g\in K(G)\}|.
\end{equation}

In this article, we restrict our attention to finite $p$-groups of conjugate type $(1,\; p^n)$. In 2002, K.Ishikawa \cite{Ishikawa2002} proved that nilpotency class of such a group can be either $2$ or $3$. So we consider following two family; $\mathcal{G}_2,$ the family of finite $p$-groups of nilpotency class $2$ and conjugate type $(1,\; p^n)$, $n\geq 1$ and $\mathcal{G}_3,$ the family of finite $p$-groups of nilpotency class $3$ and conjugate type $(1,\; p^n)$, $n\geq 1$.\\

Consider the  following group $G_r$, constructed by Ito \cite[Page 23]{Ito53}, for any positive integer $r \ge 1$ and prime $p > 2$.
\begin{eqnarray}\label{eqn2}
G_r  &=& \big{\langle} a_1, \ldots, a_{r+1} \mid [a_i,\; a_j] = b_{ij}, [a_k,\; b_{ij}] = 1,\\
 &  &\;\;a_i^p = a_r^p = b_{ij}^p = 1, 1 \le i < j \le r+1, 1 \le k \le r+1\big{\rangle}.\nonumber
\end{eqnarray}

\noindent Note that $G_r \in \mathcal{G}_2$, for all $r\geq 1$. In particular, $G_r$ is of conjugate type $(1,\; p^r)$ and $C_{G_r}(g)=\langle g,\; Z(G_r)\rangle$, for all non central elements $g\in G_r$. Here $Z(G)$ denotes the center of $G$, for any group $G$. Nath and Yadav \cite{NY} computed $Pr_g(G_r)$ and proved that $|P(G_r)| = 1$, for all $r\geq 1$. They also gave explicit formula for $Pr_g(G)$, when $G$ is a camina $p$-group of nilpotency class $2$ and proved that $|P(G)| = 1$, for such a group $G$. Then they asked the following question:\\

\noindent \textbf{Question.} Is it true that $|P(G)|=1$ for all finite $p$-groups $G$ of conjugate type $(1,\; p^n)$ and nilpotency class $2$ ?\\

In an attmpt to answer this, we prove the following theorem.

\begin{thm}\label{th1}
Let $n\geq 1$ be a given positive integer. Then there always exist a group $G$ (depending on $n$) in $\mathcal{G}_2$ such that $|P(G)|=n$.
\end{thm}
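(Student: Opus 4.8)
The plan is to produce, for each $n\ge 1$, a $p$-group $G\in\mathcal{G}_2$ whose non-trivial commutators fall into exactly $n$ distinct fiber sizes. The natural source of examples is a variation on Ito's group $G_r$: there the commutator subgroup is elementary abelian of rank $\binom{r+1}{2}$, every non-central element has centralizer $\langle g, Z(G)\rangle$, and for $g\in G'$ one has $|fiber(g)| = |Z(G)|\cdot|G'|^{-1}\cdot|C_G(\text{preimage})|$-type formulas that turn out to be constant. To break this uniformity, I would build $G$ so that the ``amount of commuting'' detected by a commutator $g$ depends on $g$ — concretely, so that for $g\in G'$ the number of pairs $(x,y)$ with $[x,y]=g$ depends on which coset structure $g$ sits in inside $G'$. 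The mechanism: for $G$ of class $2$, $Pr_g(G)=\frac{1}{|G|^2}\sum_{\chi\in\Irr(G)}\frac{\chi(1)^2}{|G|}\overline{\chi(g)}\cdot(\text{something})$; more usefully, since $[x,y]=g$ depends only on the images of $x,y$ in $G/Z(G)$ together with the bilinear commutator pairing $G/Z(G)\times G/Z(G)\to G'$, one has
$$
|fiber(g)| \;=\; |Z(G)|^2\cdot\bigl|\{(\bar x,\bar y)\in (G/Z(G))^2 : [\bar x,\bar y]=g\}\bigr|,
$$
and the latter count is governed entirely by the alternating bilinear map $\beta\colon V\times V\to W$, where $V=G/Z(G)$, $W=G'$, both elementary abelian $p$-groups.

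The key steps, in order, would be: (1) reduce the problem to a purely linear-algebra construction — find an alternating bilinear surjection $\beta\colon V\times V\to W$ of $\mathbb{F}_p$-vector spaces such that (a) for every nonzero $v\in V$ the map $\beta(v,-)\colon V\to W$ is surjective (this is exactly the conjugate-type $(1,\,p^{\dim W})$ condition, ensuring $G\in\mathcal{G}_2$), and (b) the function $w\mapsto |\beta^{-1}(w)|$ on $W\setminus\{0\}$ takes exactly $n$ distinct values; (2) check that condition (a) indeed forces $G$, the group obtained from $\beta$ via the standard class-$2$ construction $G = V\times W$ with multiplication $(v_1,w_1)(v_2,w_2)=(v_1+v_2,\ w_1+w_2+\tfrac12\beta(v_1,v_2))$ (or the cocycle version for $p=2$), to lie in $\mathcal{G}_2$ with $Z(G)=0\times W$ and $G'=0\times W$; (3) translate $|\beta^{-1}(w)|$ back into $|fiber(w)|=|Z(G)|^2\cdot|\beta^{-1}(w)|$ and conclude via~\eqref{eqn1} that $|P(G)|=n$; (4) exhibit the required $\beta$ for each $n$ — e.g. take $V$ with a direct-sum-like decomposition and let $\beta$ be a ``block'' alternating form whose blocks have controlled ranks over subspaces of $W$, so that the fiber-size function stratifies $W$ into $n$ pieces, while simultaneously arranging surjectivity of each $\beta(v,-)$ by adding a ``generic'' nondegenerate block. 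A clean concrete template: let $W=\mathbb{F}_p^n$ and let $\beta$ be chosen so that $\beta^{-1}(w)$ has size depending only on the number of nonzero coordinates of $w$ (giving potentially $n+1$ values, then trim to $n$), or directly index $n$ strata by a flag in $W$.

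The main obstacle I expect is step (1)(a)+(b) simultaneously: it is easy to write down an alternating $\beta$ with wildly varying fiber sizes, and easy to write one with every $\beta(v,-)$ surjective, but reconciling both — genuinely needing the group to have just two conjugacy class sizes while the commutator distribution is as spread out as possible — is the crux, because surjectivity of all the ``partial'' maps $\beta(v,-)$ is a strong genericity constraint that tends to homogenize the total fiber counts (this homogenization is precisely why the answer to Nath–Yadav's Question is not automatically ``no''). I would resolve it by a dimension count: choose $\dim V$ large relative to $n$ and $\dim W = $ something like $n$ or a bit more, write $\beta = \beta_0 \oplus (\text{perturbation})$ where $\beta_0$ is a symplectic form valued in a line (forcing surjectivity cheaply) and the perturbation is supported on a complementary subspace and engineered, via an explicit incidence/rank pattern, to create exactly $n$ fiber sizes; then verify surjectivity of $\beta(v,-)$ for all $v\neq0$ by checking it separately on the two summands. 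Finally I would record the resulting group presentation explicitly (in the style of~\eqref{eqn2}) so that the example is self-contained, and double-check the degenerate small cases $n=1$ (recovering $G_r$-type behaviour) and $n=2$ by hand.
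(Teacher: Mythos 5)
Your reduction to an alternating bilinear map $\beta\colon V\times V\to W$ with $V=G/Z(G)$, $W=G'$ and $|fiber(g)|=|Z(G)|^2\,|\beta^{-1}(g)|$ is sound, and it is implicitly the setting the paper works in as well. But your condition (1)(a) is wrong, and fatally so: requiring $\beta(v,-)\colon V\to W$ to be \emph{surjective} for every $v\neq 0$ says that $[x,\,G]=G'$ for every non-central $x$, i.e.\ that $G$ is a Camina group of nilpotency class $2$, and for such groups Nath and Yadav already proved $|P(G)|=1$ (indeed $T_g=G\setminus Z(G)$ for every $1\neq g\in G'$, so Lemma \ref{lem1} returns the same value of $Pr_g(G)$ for all such $g$). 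Hence your (a) and (b) are incompatible for $n\geq 2$, and your own concrete template (a symplectic form valued in a line plus a perturbation) cannot even satisfy (a) once $\dim W>1$. The condition you actually need is that the subspace $\beta(v,V)\leq W$ have \emph{constant dimension} $k$ for every $v\neq 0$, with $k$ strictly smaller than $\dim W$ when $n\geq 2$; that is what conjugate type $(1,\,p^{k})$ means, and the entire difficulty of the theorem is to keep this rank constant while making the global fiber counts $|\beta^{-1}(w)|$ non-constant.

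Beyond this, the construction itself --- which is the whole content of the theorem --- is never produced: you correctly identify reconciling the two conditions as ``the crux'' and then defer it to an unspecified perturbation and dimension count. For comparison, the paper takes Ito's group $G_m$ with $m=n^2+n-3$, whose associated $\beta$ is the universal alternating map $V\times V\to \Lambda^2V$, and passes to $\overline{G}=G_m/H$, where $H\leq G_m'$ is generated by explicit products $[a_{2i-1},\,a_{2i}][a_{2j-1},\,a_{2j}]$ arranged in blocks of sizes $1,2,\dots,n-1$. The two key lemmas (Lemmas \ref{lem4} and \ref{lem5}) show that in $G_m$ no single commutator, and no product of two commutators, can equal a product of too many independent basic commutators; the first fact preserves the two-class-size property after quotienting, and the second shows that the fiber of $\overline{h}$ in $\overline{G}$ is a union of exactly $|A_h|$ equal-sized fibers of $G_m$, where $A_h=K(G_m)\cap hH$ and $|A_h|$ ranges over $\{1,2,\dots,n\}$. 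Some combinatorial control of this kind --- over which products of basic commutators are themselves commutators --- is unavoidable, and your proposal contains no mechanism for it.
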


Recently Naik, Kitture and Yadav \cite{NYK} proved that finite groups of conjugate type $(1,\; p^n)$ and nilpotency class $3$ exist if and only if $n$ is even and for each positive even integer $2m$, finite  $p$-group of nilpotency class $3$ and of conjugate type $(1,\;p^{2m})$ is unique up to isoclinism (for definition see Section $2$). For such groups, we prove the following result.

\begin{thm}\label{th2}
Let $G\in \mathcal{G}_3$ be a finite $p$-group of conjugate type $(1,\; p^{2n})$. Then for $g \in G'$, 
\begin{equation*}
Pr_g(G) = 
\begin{cases}
\dfrac{p^{3n}+p^{2n}-1}{p^{5n}}, & \text{if}\ g=1\\
\dfrac{p^{2n}-1}{p^{5n}}, & \text{if}\ 1\neq g \in G'.
\end{cases}
\end{equation*}

Hence $\mid P(G)\mid=1$.
\end{thm}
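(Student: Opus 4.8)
The plan is to reduce to a single explicit group by isoclinism and then run a short character-theoretic computation. First observe that $Pr_g(G)$ is an isoclinism invariant: for $(x,y)\in G\times G$ the commutator $[x,y]\in G'$ is unchanged when $x,y$ are multiplied by central elements, so it depends only on the images $\bar x,\bar y$ of $x,y$ in $G/Z(G)$, and $|fiber(g)|=|Z(G)|^2\,m_G(g)$, where $m_G(g)$ is the number of pairs in $(G/Z(G))\times(G/Z(G))$ whose commutator equals $g$. An isoclinism $(\varphi,\psi)$ between $G$ and $H$ carries this count bijectively onto $m_H(\psi(g))$ and preserves $|G/Z(G)|$, so dividing by $|G|^2$ gives $Pr_g(G)=Pr_{\psi(g)}(H)$, whence $P(G)=P(H)$. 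By \cite{NYK} the members of $\mathcal{G}_3$ of conjugate type $(1,p^{2n})$ form a single isoclinism class, so it suffices to establish the formula for the explicit representative $G$ of \cite{NYK}; for that group one has $|G|=p^{5n}$, $Z(G)=\gamma_3(G)$ of order $p^{2n}$, $|G'|=p^{3n}$ and $|G:G'|=p^{2n}$.

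The key structural input is that $G$ has exactly two irreducible character degrees, $1$ and $p^{n}$, with $p^{2n}$ linear characters. I would read this off from the presentation of $G$ in \cite{NYK}; concretely it suffices to exhibit an abelian normal subgroup of index $p^{n}$, for then Ito's theorem gives $\chi(1)\mid p^{n}$, hence $\chi(1)^2\le p^{2n}$, for every $\chi\in\Irr(G)$. Since $G$ is of conjugate type $(1,p^{2n})$ one has $k(G)=|Z(G)|+(|G|-|Z(G)|)/p^{2n}=p^{3n}+p^{2n}-1$, so there are exactly $p^{3n}-1$ nonlinear irreducible characters, and $\sum_{\chi(1)>1}\chi(1)^2=|G|-|G:G'|=p^{2n}(p^{3n}-1)$; the average of the numbers $\chi(1)^2$ over the nonlinear $\chi$ being exactly $p^{2n}$ while each of them is at most $p^{2n}$, every nonlinear $\chi$ satisfies $\chi(1)=p^{n}$. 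I expect the verification of this input — equivalently, the determination that the set of character degrees of the \cite{NYK} group is $\{1,p^{n}\}$ — to be the only genuine obstacle; everything after it is routine.

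With the two character degrees in hand I would invoke Frobenius's commutator-counting formula $|fiber(g)|=|G|\sum_{\chi\in\Irr(G)}\chi(g)/\chi(1)$, that is, $Pr_g(G)=|G|^{-1}\sum_{\chi\in\Irr(G)}\chi(g)/\chi(1)$. For $g=1$ this is just $Pr_1(G)=k(G)/|G|=(p^{3n}+p^{2n}-1)/p^{5n}$. For $1\neq g\in G'$, split the sum into the $p^{2n}$ linear characters, each of which contributes $\chi(g)=1$ because $g\in G'$, and the $p^{3n}-1$ nonlinear ones, each of degree $p^{n}$; column orthogonality of the column of $g$ against the identity column, namely $\sum_{\chi\in\Irr(G)}\chi(1)\chi(g)=0$ for $g\neq1$, then yields $p^{2n}+p^{n}\sum_{\chi(1)=p^{n}}\chi(g)=0$, so $\sum_{\chi(1)=p^{n}}\chi(g)=-p^{n}$ and $Pr_g(G)=p^{-5n}\bigl(p^{2n}+p^{-n}(-p^{n})\bigr)=(p^{2n}-1)/p^{5n}$. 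This value is independent of $g$ and positive, so every non-trivial element of $G'$ lies in $K(G)$; hence $P(G)=\{(p^{2n}-1)/p^{5n}\}$ and $|P(G)|=1$. (For $1\neq g\in G'$ the last computation is also a special case of the formula of Pournaki and Sobhani \cite{PS08} for finite groups with two irreducible character degrees.)
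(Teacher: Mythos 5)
Your architecture is sound at both ends: the isoclinism reduction matches the paper's own setup (Lemma~\ref{prop5} and the lemma following it), the count $k(G)=p^{3n}+p^{2n}-1$ is correct, and \emph{if} the irreducible character degrees of $G$ are exactly $\{1,p^n\}$, then your Frobenius--orthogonality computation does yield the stated values of $Pr_g(G)$ and $|P(G)|=1$. The problem is that the degree claim is not a side input one can ``read off'' and then average against --- it carries the entire content of the theorem (expanding the class function $\sum_{\chi\in\Irr(G)}\chi/\chi(1)$ in the basis $\Irr(G)$ shows the claim is \emph{equivalent} to the stated formula for $Pr_g$) --- and the one concrete route you propose for it provably cannot work. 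You want an abelian normal subgroup of index $p^n$ so that Ito's degree theorem forces $\chi(1)\mid p^n$. No such subgroup exists: any abelian subgroup $A\not\leq Z(G)$ contains a non-central element $a$ and hence lies in $C_G(a)$, which has order $|G|/|a^G|=p^{5n}/p^{2n}=p^{3n}$; so every abelian subgroup of $G$ (normal or not) has index at least $p^{2n}$, and Ito's theorem, or the elementary bound $\chi(1)\leq[G:A]$, gives only $\chi(1)\leq p^{2n}$. Combined with $\chi(1)^2\leq[G:Z(G)]=p^{3n}$ this does rescue the case $n=1$, where $\chi(1)\leq p$ follows, but for $n\geq 2$ your averaging argument is left with no usable upper bound on $\chi(1)$ and the proof is incomplete.

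The paper closes exactly this gap by a complementary averaging argument that avoids characters altogether. It computes $Pr_1(G)$ from the class equation (Lemma~\ref{lem2}), so that $\sum_{1\neq g\in G'}Pr_g(G)=(p^{3n}-1)\tfrac{p^{2n}-1}{p^{5n}}$ is forced, and then proves the \emph{lower} bound $Pr_g(G)\geq\tfrac{p^{2n}-1}{p^{5n}}$ for every $1\neq g\in G'$ by exhibiting, for each such $g$, at least $p^{2n}-1$ cosets $xZ(G)$ with $g\in[x,\;G]$; this is the content of Lemmas~\ref{lem9}, \ref{lem7} and \ref{lem8}, which rest on the explicit generating-set structure of these groups from \cite{NYK}. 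Since $G'\setminus\{1\}$ has exactly $p^{3n}-1$ elements, all the inequalities must be equalities. If you wish to keep the character-theoretic framing, the honest task is to prove $\chi(1)\leq p^n$ for every $\chi\in\Irr(G)$ directly from the \cite{NYK} presentation; that is precisely the hard structural work your proposal defers, and it is not routine.
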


We remark that this theorem rectifies a faulty statement \cite[Theorem $5.13$]{NY}, where it was claimed that $|P(G)|> 1$, for a finite $p$-group $G$ of conjugate type $(1,\; p^2)$ and nilpotency class $3$.

\section{Key Results}
We start with definition of {\it isoclinism}, which was introduced by P. Hall \cite{Hall40} in $1940$. Let $X$ be a finite group and $\overline{X} = X/\Z(X)$. Then the map $a_{X} : \overline{X} \times \overline{X} \mapsto X'$ such that $a_{X}(x\Z(X),\; y\Z(X)) = [x,\;y]$ for $(x,\;y) \in X \times X$ is well-defined. Two finite groups $G$ and $H$ are said to be \emph{isoclinic} if there exists an  isomorphism $\phi$ of the factor group $\overline G = G/\Z(G)$ onto $\overline{H} = H/\Z(H)$, and an isomorphism $\theta$ of the subgroup $G'$ onto  $H'$ such that the following diagram is commutative.
\[
 \begin{CD}
   \overline G \times \overline G  @>a_G>> G'\\
   @V{\phi\times\phi}VV        @VV{\theta}V\\
   \overline H \times \overline H @>a_H>> H'.
  \end{CD}
\]
The resulting pair $(\phi,\; \theta)$ is called an \emph{isoclinism} of $G$  onto $H$. Notice that isoclinism is an equivalence relation among finite groups.

The following result follows from \cite{Hall40}.

\begin{lemma}\label{prop5}
Let $G$ be a finite $p$-group. Then there exists a group $H$ in the isoclinism family of $G$ such that $Z(H) \leq H'$. 
\end{lemma}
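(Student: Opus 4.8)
The plan is to obtain $H$ as a group of minimal order in the isoclinism family of $G$, so the proof rests on one elementary reduction step together with a structural analysis of why a minimal group cannot fail to be ``stem''. First I would record the basic move: if $A \le \Z(G)$ with $A \cap G' = 1$, then $G$ is isoclinic to $G/A$. Indeed $(G/A)' = G'A/A \cong G'$ because $A \cap G' = 1$; and if $gA \in \Z(G/A)$ then $[g,G] \subseteq A$, hence $[g,G] \subseteq A \cap G' = 1$ and $g \in \Z(G)$, so $\Z(G/A) = \Z(G)/A$ and $G/\Z(G) \cong (G/A)/\Z(G/A)$. The two commutator squares then match under the natural quotient maps, so one may pass from $G$ to $G/A$ without leaving the isoclinism family, with the order dropping strictly when $A \ne 1$.

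Next I would let $H$ be a finite $p$-group of minimal order isoclinic to $G$; such a group exists since orders in the family are bounded below by $|G/\Z(G)|$ and only finitely many groups have any prescribed order. Minimality already forces $\Omega_1(\Z(H)) \le H'$: otherwise there is $a \in \Z(H)$ of order $p$ with $a \notin H'$, and then $\langle a \rangle \cap H' = 1$, so by the reduction step $H/\langle a\rangle$ would be a smaller group in the family, a contradiction. It remains to upgrade $\Omega_1(\Z(H)) \le H'$ to $\Z(H) \le H'$.

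So suppose $\Z(H) \not\le H'$, and pick $z \in \Z(H)\setminus H'$ of minimal order, necessarily $p^k$ with $k \ge 2$; then $z^p \in H'\setminus\{1\}$. At this point no central quotient reduces $H$ further, since every central subgroup of order $p$ lies in $\Omega_1(\Z(H)) \le H'$; one must therefore contradict minimality by producing a strictly smaller isoclinic group that is \emph{not} a quotient of $H$, equivalently by constructing a stem group directly from the isoclinism data $(\overline H, H', a_H)$. The key point enabling this is that $a_H$ has trivial radical (if $a_H(\overline g, -)$ is trivial then $g \in \Z(H)$, i.e. $\overline g = 1$), so any central extension $1 \to Z_0 \to K \to \overline H \to 1$ realizing $a_H$ automatically has $\Z(K) = Z_0$; moreover realizing the data so that $K' \cong H'$ compatibly forces $|Z_0 \cap K'|$ to equal the isoclinism invariant $|\Z(H)\cap H'|$, whence a realization with $|Z_0| = |\Z(H)\cap H'|$ satisfies $Z_0 = Z_0 \cap K' \le K'$ and is the desired stem group. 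Exhibiting such a realization — in effect, ``trading'' the element $z$ for a commutator expression of $z^p$ inside a suitably enlarged, then reduced, isoclinic group — is the step I expect to be the main obstacle, and it is exactly the content of Hall's argument in \cite{Hall40}.
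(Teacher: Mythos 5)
Your preliminary steps are correct and cleanly done: the verification that factoring out a central subgroup $A$ with $A\cap G'=1$ stays inside the isoclinism family is right, and so is the deduction that a minimal-order member $H$ of the family satisfies $\Omega_1(\Z(H))\leq H'$. But the argument stops exactly where the lemma becomes nontrivial. The passage from $\Omega_1(\Z(H))\leq H'$ to $\Z(H)\leq H'$ is not a loose end that can be deferred --- it is the entire content of the statement --- and your own text concedes that exhibiting the required realization ``is exactly the content of Hall's argument.'' That this gap is genuine, and not closable by the tools you actually establish, is shown by a concrete example: let $G$ be the central product of an extraspecial group $E$ of order $p^3$ with $\langle z\rangle\cong C_{p^2}$, amalgamating $\Z(E)$ with $\langle z^p\rangle$. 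Then $\Z(G)=\langle z\rangle$ and $G'=\langle z^p\rangle$, so every nontrivial central subgroup meets $G'$ nontrivially and $\Omega_1(\Z(G))\leq G'$, yet $G$ is not a stem group; moreover $G'=\langle z^p\rangle$ has no complement in $\Z(G\times A)=\langle z\rangle\times A$ for any finite abelian $A$ (writing $z=wn$ in a putative decomposition $W\times N$ forces $z^p=n^p\in N$), so no sequence of ``multiply by an abelian group, then factor out a central subgroup avoiding the derived subgroup'' reaches the stem group $E$.

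Closing the gap needs a genuinely new ingredient. One option is the further isoclinism-preserving move that any subgroup $S$ with $S\Z(G)=G$ is isoclinic to $G$ (which handles the example above, since $E\Z(G)=G$), combined with additional analysis; the other is Hall's actual construction, which realizes the data $(G/\Z(G),\,G',\,a_G)$ via a free presentation $F/R\cong G/\Z(G)$ and exploits the fact that the relevant quotient of $R$ is free abelian, so that the portion of the centre lying outside the derived subgroup can be split off and removed. Your sketch correctly identifies the target (a central extension with $|Z_0|=|\Z(H)\cap H'|$; and $\Z(H)\cap H'$ is indeed an isoclinism invariant, though you do not prove that either), but no such extension is constructed. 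For comparison, the paper itself gives no proof of this lemma and simply cites Hall, so your attempt in fact supplies more detail than the source; nevertheless, as a self-contained proof it has a genuine gap at the decisive step.
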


\noindent Such a group $H$ is called  a \emph{stem group} in its isoclinism class.\\

\noindent Proof of following interesting result can be found in \cite[Lemma $3.5$]{PS08} and \cite[Theorem $2.3$]{NY}

\begin{lemma}
Let $G$ and $H$ be two isoclinic groups with isoclinism $(\phi,\; \theta)$. Then $Pr_g(G) = Pr_{\theta(g)}(G)$.
\end{lemma}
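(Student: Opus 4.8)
The intended content of this lemma is the isoclinism-invariance of the distribution, so the right-hand side should be $\Pr_{\theta(g)}(H)$ (the group on the right is $H$, not $G$), and this is what I would prove. The plan is to reduce $\Pr_g$ to a quantity that lives purely on the central quotient $\br G$ and the commutator map $a_G$, and then transport all counts across the isoclinism using the two isomorphisms $\phi$ and $\theta$ together with the commutative square defining the isoclinism.

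First I would exploit that a commutator is unchanged when either argument is multiplied by a central element: for $z,w \in \Z(G)$ one has $[xz,\;yw]=[x,\;y]$. Hence whether $[x,\;y]=g$ depends only on the coset pair $(x\Z(G),\; y\Z(G))$, and each coset pair solving $a_G(\bar x,\bar y)=g$ lifts to exactly $|\Z(G)|^2$ honest pairs $(x,y)\in G\times G$. Writing $a_G^{-1}(g)=\{(\bar x,\bar y)\in \br G\times \br G \mid a_G(\bar x,\bar y)=g\}$, this gives
\[
 |\{(x,y)\in G\times G \mid [x,\;y]=g\}| \;=\; |\Z(G)|^2\cdot |a_G^{-1}(g)|.
\]
Dividing by $|G|^2=|\Z(G)|^2\,|\br G|^2$, the central factor cancels and I obtain the center-free expression
\[
 \Pr_g(G) \;=\; \frac{|a_G^{-1}(g)|}{|\br G|^2}.
\]

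Next I would invoke the defining commutative square $\theta\circ a_G = a_H\circ(\phi\times\phi)$. Since $\theta$ is injective, for any pair $(\bar x,\bar y)$ we have $a_G(\bar x,\bar y)=g$ if and only if $a_H(\phi\bar x,\;\phi\bar y)=\theta(g)$; and since $\phi\colon \br G\to \br H$ is a bijection, the map $\phi\times\phi$ therefore restricts to a bijection from $a_G^{-1}(g)$ onto $a_H^{-1}(\theta(g))$. This yields $|a_G^{-1}(g)|=|a_H^{-1}(\theta(g))|$, and as $\phi$ also gives $|\br G|=|\br H|$, applying the center-free formula to both $G$ and $H$ produces $\Pr_g(G)=\Pr_{\theta(g)}(H)$. (If $g\notin G'$ both sides vanish, and $\theta$ carries $K(G)$ into $K(H)$ because it intertwines the two commutator maps, so no case is lost.)

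I do not expect a genuine obstacle here: the only point demanding care is the normalization bookkeeping, namely verifying that the factor $|\Z(G)|^2$ introduced by lifting coset pairs cancels exactly against $|G|^2=|\Z(G)|^2\,|\br G|^2$, so that $\Pr_g(G)$ is seen to depend only on the pair $\bigl(|a_G^{-1}(g)|,\,|\br G|\bigr)$. Both of these are manifestly isoclinism invariants, and once this is isolated the conclusion is immediate from the commutative diagram.
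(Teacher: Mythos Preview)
Your argument is correct, including the observation that the right-hand side should read $\Pr_{\theta(g)}(H)$. The paper does not supply its own proof of this lemma; it simply cites \cite[Lemma~3.5]{PS08} and \cite[Theorem~2.3]{NY}, and your reduction of $\Pr_g(G)$ to the center-free quantity $|a_G^{-1}(g)|/|\br G|^2$ followed by transport along the isoclinism square is exactly the standard proof found in those references.
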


\noindent In the light of the preceding two results, to compute $Pr_g(G)$ or $P(G)$, for any finite group $G$, we only need to consider a stem group from the isoclinic family of $G$.\\
 
Now we move towards some technical Lemmas. For a finite group $G$ and element $g\in K(G)$, we define 
$$T_g = \{x\in G: g\in [x,\; G]\}$$
$$ \mbox{and}$$
$$ TZ_g = \{xZ(G)\in G/Z(G): g\in [x,\; G]\}.$$
Note that; 
$$|T_g| = |Z(G)||TZ_g|.$$ 

Following useful expression of $Prg(G)$ is due to Das and Nath \cite{Das-Nath}.

\begin{lemma}\label{lem1}
Let $G$ be a finite group and $g\in G$. Then
$$Pr_g(G)=\dfrac{1}{|G|}\sum_{x\in T_g} {\dfrac{1}{|x^G|}}.$$
\end{lemma}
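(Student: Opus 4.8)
The plan is to count the fiber $|fiber(g)|$ directly and then divide by $|G|^2$, using the identity $Pr_g(G)=|fiber(g)|/|G|^2$ (which holds for every $g\in G$, since $fiber(g)=\emptyset$ and $T_g=\emptyset$ occur simultaneously when $g\notin K(G)$, making both sides zero). The computational heart is the rewriting $[x,y]=x^{-1}x^y$, where $x^y:=y^{-1}xy$, so that the equation $[x,y]=g$ becomes the conjugation equation $x^y=xg$. This turns the fiber count into a problem about how many elements $y$ conjugate $x$ to the one specific element $xg$.

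First I would slice the fiber by its first coordinate:
$$|fiber(g)|=\sum_{x\in G}\big|\{y\in G: x^y=xg\}\big|.$$
For a fixed $x$, the inner set is nonempty exactly when $xg$ belongs to the conjugacy class $x^G$; and when it is nonempty, the elements $y$ carrying $x$ to the single target $xg$ form a right coset of the centralizer $C_G(x)$, so there are precisely $|C_G(x)|$ of them. Thus every nonzero inner term equals $|C_G(x)|$, independently of which conjugate $xg$ happens to be.

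Next I would identify the index set of contributing $x$. The condition $xg\in x^G$ means $h^{-1}xh=xg$ for some $h\in G$, equivalently $[x,h]=g$, i.e.\ $g\in[x,G]$, which is exactly the defining condition for $x\in T_g$. Hence the sum collapses to
$$|fiber(g)|=\sum_{x\in T_g}|C_G(x)|.$$
Substituting the orbit--stabilizer identity $|C_G(x)|=|G|/|x^G|$ and dividing by $|G|^2$ then produces the asserted formula.

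Every step is elementary, so I do not anticipate a genuine obstacle; the two points that must be handled with care are the coset-counting claim — that, once $xg$ is a conjugate of $x$, the solution set of $x^y=xg$ is a full coset of $C_G(x)$ and therefore has size $|C_G(x)|$ regardless of the target — and the equivalence $xg\in x^G\Leftrightarrow x\in T_g$, since it is precisely this equivalence that justifies replacing the sum over all of $G$ by the sum over $T_g$.
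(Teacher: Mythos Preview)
Your argument is correct and is the standard elementary derivation of this formula. The paper itself does not supply a proof: it simply attributes the result to Das and Nath and states the lemma without argument, so there is no in-paper proof to compare against. One inconsequential slip: from $y^{-1}xy=y_0^{-1}xy_0$ one gets $yy_0^{-1}\in C_G(x)$, so the solution set of $x^y=xg$ is the \emph{left} coset $C_G(x)y_0$ rather than a right coset; of course the cardinality is $|C_G(x)|$ either way, so the computation is unaffected.
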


We remark that for any finite group $G$ and element $1\neq g\in G$,  $$Pr_1(G) - Pr_g(G) \geq \dfrac{1}{[G : Z(G)]}.$$

Now, if $G$ is finite group with exactly two conjugacy class sizes, then we can further simplify the formula of $Pr_g(G)$.

\begin{lemma}\label{lem2}
Let $G$ be a finite group of conjugate type $(1,\; p^n)$ and $g\in K(G)$. Then
\begin{equation*}
Pr_g(G) = 
\begin{cases}
\dfrac{1}{[G:Z(G)]}\dfrac{|TZ_g|}{p^n}, & \text{if}\ g \neq 1\\
\\
\dfrac{1}{[G : Z(G)]} \big( 1 + \dfrac{[G : Z(G)] - 1}{p^n}\big), & \text{if}\ g=1
\end{cases}
\end{equation*}
\end{lemma}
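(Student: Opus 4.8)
The plan is to start from the Das--Nath formula in Lemma \ref{lem1}, namely $Pr_g(G)=\frac{1}{|G|}\sum_{x\in T_g}\frac{1}{|x^G|}$, and exploit that $G$ has conjugate type $(1,\;p^n)$, so $|x^G|$ is either $1$ or $p^n$. First I would split the sum $T_g$ according to whether $x$ is central. For $g\neq 1$ no central element $x$ can satisfy $g\in[x,\;G]=\{1\}$, so $T_g$ consists entirely of non-central elements, each contributing $\frac{1}{p^n}$; hence $Pr_g(G)=\frac{|T_g|}{|G|\,p^n}$. Using the identity $|T_g|=|Z(G)|\,|TZ_g|$ recorded just before the lemma, and $|G|=[G:Z(G)]\,|Z(G)|$, this collapses to $Pr_g(G)=\frac{1}{[G:Z(G)]}\cdot\frac{|TZ_g|}{p^n}$, which is the first case.

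For $g=1$ the set $T_1$ is all of $G$, since $1\in[x,\;G]$ for every $x$. The central elements (there are $|Z(G)|$ of them) each contribute $1$ to the sum, and the remaining $|G|-|Z(G)|$ non-central elements each contribute $\frac{1}{p^n}$. Therefore
\begin{equation*}
Pr_1(G)=\frac{1}{|G|}\left(|Z(G)|+\frac{|G|-|Z(G)|}{p^n}\right)=\frac{1}{[G:Z(G)]}+\frac{1}{p^n}\left(1-\frac{1}{[G:Z(G)]}\right),
\end{equation*}
and factoring $\frac{1}{[G:Z(G)]}$ out of the whole expression gives precisely $\frac{1}{[G:Z(G)]}\bigl(1+\frac{[G:Z(G)]-1}{p^n}\bigr)$, the second case.

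Both computations are short and essentially bookkeeping once Lemma \ref{lem1} is invoked, so there is no serious obstacle here; the only point requiring a moment's care is the decomposition of $T_g$ (and of $T_1$) into central and non-central parts together with the correct counting of each part's contribution. The identity $|T_g|=|Z(G)|\,|TZ_g|$ should be applied after that split, not before, since for $g\neq 1$ it is exactly the non-central part of $T_g$ that is being counted. I would present the two cases in sequence, $g\neq 1$ first and then $g=1$, mirroring the statement of the lemma.
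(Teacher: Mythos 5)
Your proposal is correct and follows essentially the same route as the paper: invoke Lemma \ref{lem1}, note that for $g\neq 1$ every $x\in T_g$ is non-central so $|x^G|=p^n$, apply $|T_g|=|Z(G)|\,|TZ_g|$, and for $g=1$ split $T_1=G$ into central and non-central parts. The bookkeeping matches the paper's computation exactly.
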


\begin{proof}
Suppose $1\neq g\in K(G)$. Then for each $x\in T_g$, $|x^G| = p^n$. By Lemma \ref{lem1}, we get

$$Pr_g(G) = \dfrac{1}{|G|}\sum_{x\in T_g} {\dfrac{1}{p^n}} = \dfrac{|T_g|}{|G|p^n}  = \dfrac{|Z(G)||TZ_g|}{|G|p^n} =\dfrac{1}{[G:Z(G)]}\dfrac{|TZ_g|}{p^n}$$

Now let $g=1$. Then note that $T_g = G$. Again by Lemma \ref{lem1}, we get that 
\begin{align*}
Pr_1(G) & = \dfrac{1}{|G|}\sum_{x\in G} {\dfrac{1}{|x^G|}}\\
 & = \dfrac{1}{|G|}\sum_{x\in Z(G)} {\dfrac{1}{|x^G|}} + \dfrac{1}{|G|}\sum_{x\in G\setminus Z(G)} {\dfrac{1}{|x^G|}}\\
 & = \dfrac{|Z(G)|}{|G|} + \dfrac{1}{|G|}{\dfrac{|G| - |Z(G)|}{p^n}}\\
 & = \dfrac{1}{[G : Z(G)]} \big( 1 + \dfrac{[G : Z(G)] - 1}{p^n}\big).
\end{align*}
\end{proof}

Nath and Yadav \cite{NY} gave explicit formula for $Pr_g(G_r)$, for $r\geq 1$.

\begin{lemma}\label{lem23}
Let $G_r$ be as defined in \ref{eqn2}. Then 
\begin{equation*}
Pr_g(G_r) = 
\begin{cases}
\dfrac{p^2-1}{p^{2r+1}}, & \text{if}\ g \neq 1\\
\\
\dfrac{p^{r+1}+p^r-1}{p^{2r+1}}, & \text{if}\ g=1
\end{cases}
\end{equation*}
\end{lemma}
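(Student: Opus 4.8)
The plan is to apply Lemma~\ref{lem2} to $G_r$, which is of conjugate type $(1,\;p^r)$, so there $n=r$. All I need are the index $[G_r:Z(G_r)]$ and the size $|TZ_g|$ for a non-trivial $g$. (For $g\notin K(G_r)$ one has $Pr_g(G_r)=0$, and since $K(G_r)\subsetneq G_r'$ once $r\geq 3$, the clause ``$g\neq 1$'' in the statement is really ``$1\neq g\in K(G_r)$''.)

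First I would compute the index. Since $G_r$ has conjugate type $(1,\;p^r)$, any non-central $x$ satisfies $[G_r:C_{G_r}(x)]=p^r$; and by the given description $C_{G_r}(x)=\langle x,\;Z(G_r)\rangle$, where $x$ has order $p$ modulo $Z(G_r)$, so $[C_{G_r}(x):Z(G_r)]=p$. Hence $[G_r:Z(G_r)]=p^{r+1}$. Feeding $[G_r:Z(G_r)]=p^{r+1}$ and $n=r$ into the $g=1$ branch of Lemma~\ref{lem2} gives $Pr_1(G_r)=\frac{1}{p^{r+1}}\big(1+\frac{p^{r+1}-1}{p^r}\big)=\frac{p^{r+1}+p^r-1}{p^{2r+1}}$, exactly the asserted value, so the case $g=1$ is done.

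For $1\neq g\in K(G_r)$, the other branch of Lemma~\ref{lem2} reduces the problem to proving $|TZ_g|=p^2-1$, since then $Pr_g(G_r)=\frac{1}{p^{r+1}}\cdot\frac{p^2-1}{p^r}=\frac{p^2-1}{p^{2r+1}}$. I would work with the bilinear (exterior-square) description of $G_r$: $V:=G_r/Z(G_r)=\langle\bar a_1,\dots,\bar a_{r+1}\rangle$ is elementary abelian of rank $r+1$, $G_r'$ is elementary abelian with basis $\{b_{ij}:1\leq i<j\leq r+1\}$ (so $G_r'\cong\Lambda^2 V$ via the commutator), and a direct expansion using $[a_i,\;a_j]=b_{ij}$, centrality of the $b_{ij}$ and class $2$ gives $[x,\;y]=\prod_{i<j}b_{ij}^{\,c_id_j-c_jd_i}$ whenever $\bar x=\sum_i c_i\bar a_i$, $\bar y=\sum_j d_j\bar a_j$. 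In particular $y\mapsto[x,\;y]$ is a homomorphism with kernel $C_{G_r}(x)$, so for non-central $x$ the set $[x,\;G_r]$ is a subgroup of $G_r'$ of order $p^r$, and $TZ_g$ is the set of non-zero $\bar x\in V$ for which $[x,\;y]=g$ has a solution $y$.

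The heart of the matter — and the step I expect to be the main obstacle — is to identify $TZ_g$ exactly. Write $g=[a,\;b]$ with $\bar a,\bar b$ linearly independent (possible precisely because $1\neq g\in K(G_r)$), let $W=\langle\bar a,\;\bar b\rangle$, and extend $\{\bar a,\bar b\}$ to a basis $v_1=\bar a,\,v_2=\bar b,\,v_3,\dots,v_{r+1}$ of $V$; writing commutators in this basis one has $g=[v_1,\;v_2]$ and $[x,\;y]=\prod_{i<j}[v_i,\;v_j]^{\,c_id_j-c_jd_i}$ for $\bar x=\sum_i c_iv_i$, $\bar y=\sum_i d_iv_i$ (and $\{[v_i,\;v_j]:i<j\}$ is again a basis of $G_r'$). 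I would then show $TZ_g=W\setminus\{0\}$, whence $|TZ_g|=|W|-1=p^2-1$. The inclusion $W\setminus\{0\}\subseteq TZ_g$ is immediate: for $0\neq\bar x=c_1v_1+c_2v_2$ the single equation $c_1d_2-c_2d_1=1$ is solvable in $(d_1,d_2)$, producing $\bar y\in W$ with $[x,\;y]=g$. For the converse, if $[x,\;y]=g$ with $\bar x\neq 0$, then comparing exponents shows $c_1d_2-c_2d_1=1$ while $c_id_j-c_jd_i=0$ for every other pair $i<j$; manipulating these relations forces $c_k=d_k=0$ for all $k\geq 3$, i.e.\ $\bar x,\bar y\in W$. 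This last manipulation is the only genuinely combinatorial step; everything else is routine bookkeeping with Lemma~\ref{lem2}.
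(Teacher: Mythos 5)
Your proposal is correct, but a direct comparison with the paper is not possible here: the paper does not prove Lemma~\ref{lem23} at all, it simply quotes the formula from Nath--Yadav \cite{NY}. What you have written is therefore a genuine addition, and it is a sound one. The reduction via Lemma~\ref{lem2} with $n=r$ and $[G_r:Z(G_r)]=p^{r+1}$ (which follows from the fact, stated in the paper, that $C_{G_r}(x)=\langle x,\;Z(G_r)\rangle$ for non-central $x$) disposes of the case $g=1$ immediately, and correctly reduces the case $1\neq g\in K(G_r)$ to showing $|TZ_g|=p^2-1$. The step you flag as the main obstacle is in fact standard: since the commutator map identifies $G_r'$ with the exterior square of $V=G_r/Z(G_r)$, the equation $[x,\;y]=g=[a,\;b]$ says that $\bar x\wedge\bar y=\bar a\wedge\bar b$ is a fixed non-zero decomposable $2$-vector, and such a vector determines the plane $W=\langle\bar a,\;\bar b\rangle$ uniquely (e.g.\ the $2\times(r+1)$ coordinate matrix of $(\bar x,\bar y)$ has rank $2$ with all $2\times2$ minors vanishing except the $(1,2)$-minor, which forces every column beyond the second to vanish in your adapted basis). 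Hence $TZ_g=W\setminus\{0\}$ and $|TZ_g|=p^2-1$ as claimed. Your parenthetical caveat is also worth keeping: for $r\geq 3$ not every element of $G_r'$ is a single commutator, so the clause ``$g\neq 1$'' in the statement must indeed be read as ``$1\neq g\in K(G_r)$''; this is consistent with Lemma~\ref{lem29}, whose count of $|K(G_r)|-1$ is exactly the number of non-zero decomposable $2$-vectors.
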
 

Using preceding Lemma and the formula $Pr_g(G) = \dfrac{|fiber(g)|}{|G|^2}$, we get an expression for size of fibers in $G_r$.

\begin{lemma}\label{lem3}
Let $G_r$ be as defined in \ref{eqn2}. Then 
\begin{equation*}
|fiber(g)| = 
\begin{cases}
(p^2-1)p^{r^2+r+1}, & \text{if}\ g \neq 1\\
\\
(p^{r+1}+p^r-1)p^{r^2+r+1}, & \text{if}\ g=1
\end{cases}
\end{equation*}
\end{lemma}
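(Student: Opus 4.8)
The plan is to use nothing more than the identity $|fiber(g)| = Pr_g(G_r)\,|G_r|^2$, which is the defining relation $Pr_g(G)=|fiber(g)|/|G|^2$ solved for $|fiber(g)|$. Since Lemma \ref{lem23} already gives $Pr_g(G_r)$ in closed form for both $g=1$ and $1\neq g$, the only work left is to compute $|G_r|$, after which the two formulas of the lemma drop out by substitution.

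For the order: from the presentation \eqref{eqn2} the $b_{ij}$ are central of order $p$ and the relations $[a_i,\,a_j]=b_{ij}$ allow any word to be collected into the normal form $a_1^{e_1}\cdots a_{r+1}^{e_{r+1}}\prod_{1\le i<j\le r+1}b_{ij}^{f_{ij}}$ with exponents in $\{0,\dots,p-1\}$, so $|G_r|\le p^{\,(r+1)+\binom{r+1}{2}}$. That equality holds — equivalently, that the presentation does not degenerate and the $b_{ij}$ remain independent in $G_r'$ — is part of Ito's construction; alternatively one may note that $G_r$ is the relatively free group of exponent $p$ and nilpotency class at most $2$ on $r+1$ generators, whose order is the standard $p^{(r+1)+\binom{r+1}{2}}$, or one may extract it from the excerpt itself, since $C_{G_r}(g)=\langle g,Z(G_r)\rangle$ and conjugate type $(1,p^r)$ force $[G_r:Z(G_r)]=p^{r+1}$, while a short linear-algebra computation gives $|Z(G_r)|=|G_r'|=p^{\binom{r+1}{2}}$. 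In all cases
$$|G_r|=p^{\,(r+1)+\binom{r+1}{2}}=p^{(r+1)(r+2)/2},\qquad |G_r|^2=p^{(r+1)(r+2)}.$$

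It then remains only to substitute the two values from Lemma \ref{lem23} and simplify the exponent using $(r+1)(r+2)-(2r+1)=r^2+r+1$: for $1\neq g$ one gets $|fiber(g)|=\frac{p^2-1}{p^{2r+1}}\,p^{(r+1)(r+2)}=(p^2-1)p^{r^2+r+1}$, and for $g=1$ one gets $|fiber(1)|=\frac{p^{r+1}+p^r-1}{p^{2r+1}}\,p^{(r+1)(r+2)}=(p^{r+1}+p^r-1)p^{r^2+r+1}$, which is exactly the assertion. The argument is almost entirely bookkeeping; the only step that is not completely mechanical is pinning down $|G_r|=p^{(r+1)(r+2)/2}$, i.e.\ checking that the presentation \eqref{eqn2} is non-degenerate, and that is standard.
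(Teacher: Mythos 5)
Your proposal is correct and is essentially the paper's own argument: the paper derives Lemma \ref{lem3} by exactly the substitution $|fiber(g)| = Pr_g(G_r)\,|G_r|^2$ using Lemma \ref{lem23}, with $|G_r|=p^{(r+1)(r+2)/2}$ taken for granted (the same value is used implicitly in the proof of Lemma \ref{lem29}, where $|G_r|^2=p^{r^2+3r+2}$). The only thing you add is an explicit justification of the order of $G_r$, which the paper leaves to Ito's construction; your exponent arithmetic checks out.
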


From preceding Lemma, we get an expression for $|K(G)|$.

\begin{lemma}\label{lem29}
Let $G_r$ be as defined in \ref{eqn2}. Then $|K(G_r)|-1=\dfrac{(p^{r+1}-1)(p^r-1)}{p^2-1}$.
\end{lemma}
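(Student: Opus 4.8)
The plan is to use the obvious fact that the fibers of the commutator map partition $G_r\times G_r$: as $g$ ranges over $K(G_r)$, the sets $fiber(g)$ are pairwise disjoint with union $G_r\times G_r$, so
\begin{equation*}
\sum_{g\in K(G_r)}|fiber(g)| = |G_r|^2.
\end{equation*}
Lemma \ref{lem3} tells us that every nontrivial element of $K(G_r)$ has a fiber of the same size $(p^2-1)p^{r^2+r+1}$, and that $|fiber(1)| = (p^{r+1}+p^r-1)p^{r^2+r+1}$. Hence the sum above equals $(p^{r+1}+p^r-1)p^{r^2+r+1} + (|K(G_r)|-1)(p^2-1)p^{r^2+r+1}$, and it remains only to identify $|G_r|^2$ and solve the resulting linear equation for $|K(G_r)|$.

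So the next step is to compute $|G_r|$. From the presentation \eqref{eqn2}, $G_r$ is the free group of nilpotency class $2$ and exponent $p$ on the generators $a_1,\dots,a_{r+1}$ (here $p>2$), whose derived subgroup $G_r' = Z(G_r)$ is elementary abelian with basis $\{b_{ij}\mid 1\le i<j\le r+1\}$, while $G_r/Z(G_r)$ is elementary abelian of rank $r+1$. Therefore $|G_r| = p^{(r+1)+\binom{r+1}{2}} = p^{(r+1)(r+2)/2}$, so that $|G_r|^2 = p^{(r+1)(r+2)}$. Dividing the displayed identity through by $p^{r^2+r+1}$ and using $(r+1)(r+2)-(r^2+r+1) = 2r+1$, one gets $(|K(G_r)|-1)(p^2-1) = p^{2r+1} - p^{r+1} - p^r + 1$. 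Since the right-hand side factors as $(p^{r+1}-1)(p^r-1)$, the claimed formula follows after dividing by $p^2-1$.

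There is essentially no obstacle here beyond the bookkeeping; the one point deserving a word of justification is the value of $|Z(G_r)|$ (equivalently, of $|G_r|$). This can be obtained either directly from the presentation, by checking that an element of $G_r$ is central exactly when its image in $G_r/G_r'$ is trivial, or from the structural facts already recorded in the excerpt, namely that $G_r$ has conjugate type $(1,p^r)$ with $C_{G_r}(g) = \langle g, Z(G_r)\rangle$ for noncentral $g$, which forces $[G_r:Z(G_r)] = p^{r+1}$, together with $|G_r'| = p^{\binom{r+1}{2}}$ from the presentation. Everything else is immediate from Lemma \ref{lem3} and the partition of $G_r\times G_r$ into fibers.
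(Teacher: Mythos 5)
Your proof is correct and follows essentially the same route as the paper's: both use the partition of $G_r\times G_r$ into fibers, the fiber sizes from Lemma \ref{lem3}, and the value $|G_r|^2=p^{r^2+3r+2}$ to solve for $|K(G_r)|-1$. The only difference is that you spell out the computation of $|G_r|$, which the paper takes for granted.
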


\begin{proof}
For any finite group $G$, we know that $\underset{g\in K(G)}\sum |fiber(g)|=|G|^2.$ Thus we get, $\underset{g\in K(G)\setminus 1}\sum {|fiber(g)|}=|G|^2-|fiber(1)|.$ Therefore by Lemma \ref{lem3}, we have 
\begin{align*}
\big ( |K(G_r)|-1\big )(p^2-1)p^{r^2+r+1} & = p^{r^2+3r+2}-(p^{r+1}+p^r-1) p^{r^2+r+1}\\
 & = (p^{r^2}+r+1)(p^{2r+1}-p^{r+1}-p^r+1)\\
 & = (p^{r^2}+r+1)(p^{r+1}-1)(p^r-1).
\end{align*}
Hence $|K(G_r)|-1 = \dfrac{(p^{r+1}-1)(p^r-1)}{p^2-1}.$
\end{proof}

\begin{lemma}\label{lem4}
Suppose $G = G_{n-1}$ (as defined in \ref{eqn2}) is generated by $a_1,a_2,\dots a_n$, with $n\geq 4$. Then there do not exist $x$, $y\in G$ such that
\begin{equation}\label{eqn3}
[x,\; y] = [a_1,\; a_2]^{i_1}[a_3,\; a_4]^{i_2}\dots [a_{2m-1},\; a_{2m}]^{i_m},
\end{equation}
where $2\leq m \leq \lfloor n/2 \rfloor$ and $i_k \neq 0 \pmod{p}$, for $k=1,2,\dots m$.
\end{lemma}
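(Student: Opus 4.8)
The plan is to convert the unsolvability of \eqref{eqn3} into a statement about ranks of antisymmetric matrices over $\mathbb{F}_p$. Recall that $G=G_{n-1}$ has nilpotency class $2$, so $G'\le\Z(G)$, and from the presentation \eqref{eqn2} the group $G'$ is elementary abelian of order $p^{\binom{n}{2}}$ with $\{b_{ij}\mid 1\le i<j\le n\}$ as a basis; in particular two elements of $G'$ coincide if and only if the exponent of each $b_{ij}$ is the same modulo $p$ in both. Write an arbitrary element of $G$ as $a_1^{s_1}\cdots a_n^{s_n}w$ with $s_i\in\mathbb{Z}$ and $w\in G'$. Since $G'$ is central, for $x=a_1^{s_1}\cdots a_n^{s_n}w$ and $y=a_1^{t_1}\cdots a_n^{t_n}w'$ we have $[x,y]=[a_1^{s_1}\cdots a_n^{s_n},\,a_1^{t_1}\cdots a_n^{t_n}]$, and since $G$ has class $2$ the map $(u,v)\mapsto[u,v]$ is bi-multiplicative, with $[a_i^{k},a_j^{l}]=[a_i,a_j]^{kl}$ and $[a_i,a_i]=1$; a direct expansion then gives
\[
[x,y]=\prod_{1\le i<j\le n}b_{ij}^{\,s_it_j-s_jt_i}.
\]

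Next, attach to each $g\in K(G)$ the antisymmetric $n\times n$ matrix $A=(\alpha_{ij})$ over $\mathbb{F}_p$, where $\alpha_{ij}$ (for $i<j$) is the exponent of $b_{ij}$ in $g$, $\alpha_{ji}=-\alpha_{ij}$, and $\alpha_{ii}=0$; by the first paragraph this assignment is well defined and injective on $K(G)$. The displayed formula shows that the matrix attached to $[x,y]$ has $i$-th row equal to $s_i(t_1,\dots,t_n)-t_i(s_1,\dots,s_n)$, so all of its rows lie in the span of the two fixed vectors $(t_1,\dots,t_n)$ and $(s_1,\dots,s_n)$; hence it has rank at most $2$. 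On the other hand, the matrix attached to the right-hand side of \eqref{eqn3} is block diagonal with the $m$ blocks $\bigl(\begin{smallmatrix}0&i_k\\-i_k&0\end{smallmatrix}\bigr)$, $k=1,\dots,m$, along the diagonal and zeros elsewhere; since each $i_k\not\equiv0\pmod p$ and the blocks have pairwise disjoint supports, this matrix has rank exactly $2m$.

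Because $2\le m\le\lfloor n/2\rfloor$ forces $2m\ge4>2$, the matrix attached to the right-hand side of \eqref{eqn3} cannot be one of the rank-$\le 2$ matrices attached to commutators, so no $x,y\in G$ satisfy \eqref{eqn3}, which proves the Lemma. The only real work lies in the bookkeeping of the first two paragraphs; the one point requiring care is that $\{b_{ij}\}$ is genuinely a basis of $G'$, so that equality of commutators translates exactly into equality of the associated antisymmetric matrices, after which the rank comparison closes the argument immediately.
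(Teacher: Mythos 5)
Your proof is correct, and it takes a genuinely different route from the paper. The paper argues by contradiction with an explicit elimination on generators: assuming a solution $x,y$, it normalizes $y$ modulo a power of $x$, compares the exponents of the commutators involving $a_1$ and then $a_2$ on both sides of \eqref{eqn3}, and forces $x$ and $y$ into $\langle a_1\rangle$ and $\langle a_2\rangle$ modulo the centre, so that $[x,y]=[a_1,a_2]^{i_1}$, contradicting $m\geq 2$; this computation is somewhat delicate to follow and is tied to the specific shape of the right-hand side. You instead exploit that $G_{n-1}$ is relatively free of class $2$ and exponent $p$, so that $G'$ is elementary abelian with the $b_{ij}$ as a basis and the commutator map is the generic alternating form: the element $[x,y]$ corresponds to the rank-$\leq 2$ matrix $st^{\mathsf T}-ts^{\mathsf T}$, while the right-hand side of \eqref{eqn3} corresponds to a block-diagonal alternating matrix of rank exactly $2m\geq 4$. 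All the steps check out (the key point you correctly flag is that the $b_{ij}$ form a basis of $G'$, which follows from $|G_r|=p^{(r+1)(r+2)/2}$, so equality in $G'$ is equality of the associated matrices over $\mathbb{F}_p$). Your argument is cleaner and buys more: the same rank invariant shows that a product of $k$ commutators has associated matrix of rank at most $2k$, which immediately yields Lemma \ref{lem5} as well (there $2m\geq 6>4$), whereas the paper has to run a separate quotient argument to reduce Lemma \ref{lem5} to Lemma \ref{lem4}.
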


\begin{proof}
We prove this lemma by the method of contradiction. Suppose that there exist $x$ and $y\in G$ satisfying equation \eqref{eqn3}. Let $x = a_1^{j_1}a_2^{j_2}\dots a_n^{j_n}$ and $y = a_1^{k_1}a_2^{k_2}\dots a_n^{k_n}$ \big(reading modulo $\Z(G)$\big). As $i_1 \neq 0 \pmod{p}$, at least one of $j_1$ and $k_1$ has to be non-zero modulo $p$. Without loss of generality, we take $j_1$ to be non-zero. Then we can write $y$ as 
$$y = a_1^{k_1}a_2^{k_2}\dots a_n^{k_n} = (a_1^{j_1}a_2^{j_2}\dots a_n^{j_n})^{k_1j_1^{-1}}(a_2^{l_2}a_3^{l_3}\dots a_n^{l_n}z_1) = x^{k_1j_1^{-1}}(a_2^{l_2}a_3^{l_3}\dots a_n^{l_n}z_1),$$
where $z_1 \in Z(G)$ and $l_2,\;l_3,\;l_4$ are some suitable integers. Now 
$$[x,\; y] = [x,\; y_1],\;\;\;\; \mbox{where}\;\;\; y_1= a_2^{l_2}a_3^{l_3}\dots a_n^{l_n}.$$ 
Computing $[x,\; y_1]$, we get that $[a_1,\; a_2^{j_1l_2}a_3^{j_1l_3}\dots a_n^{j_1l_n}]$ is exactlly the commutator involving $a_1$ in left-hand side of equation \eqref{eqn3}. Compairing it with right-hand side of equation \eqref{eqn3}, we see that $l_2$ has to be non-zero modulo $p$, in particular $l_2=i_1j_1^{-1}$ and $l_3$, $l_4,\dots, l_n$ have to be zero modulo $p$. Thus we have 
$$y_1 = b^{i_1j_1^{-1}}\;\;\; \mbox{with}\;\; j_1\;\; \mbox{non-zero modulo}\;\; p.$$
Now, again computing $[x,\; y]$, considering commutator involving $a_2$ and compairing with equation \eqref{eqn3}, we see that $j_2$, $j_3,\dots, j_n$ have to be zero modulo $p$. So we have $x=a^{j_1}$ and $y_1=b^{i_1.j_1^{-1}}$ with $j_1$ non-zero. Therefore $[x,\; y] = [x,\; y_1] = [a,\; b]^{i_1}$, a contradiction. 
\end{proof}

\begin{lemma}\label{lem5}
Suppose $G = G_n$ (as defined in \ref{eqn2}) is generated by $a_1,a_2,\dots a_n$, with $n\geq 6$. Then there do not exist $x$, $y$, $z$ and $w\in G$ such that
\begin{equation}\label{eqn4}
[x,\; y][z,\; w] = [a_1,\; a_2]^{i_1}[a_3,\; a_4]^{i_2}\dots [a_{2m-1},\; a_{2m}]^{i_m},
\end{equation}
where $3\leq m \leq \lfloor n/2 \rfloor$ and $i_k \neq 0 \pmod{p}$, for $k=1,2,\dots m$.
\end{lemma}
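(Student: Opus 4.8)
The plan is to translate the identity \eqref{eqn4} into the exterior square of the abelianisation of $G$ and then invoke the rank theory of alternating $2$-tensors. Write $V := G/Z(G)$. Since $G$ has nilpotency class $2$ and $Z(G)=G'$, the group $V$ is an $\mathbb F_p$-vector space with basis $\overline{a}_1,\dots,\overline{a}_n$ (the images of the generators), and — exactly as in the proof of Lemma~\ref{lem4}, using that the $b_{ij}=[a_i,a_j]$ ($i<j$) form an $\mathbb F_p$-basis of $G'$ — the alternating bilinear map $a_G$ induces an isomorphism $\Phi\colon \wedge^2 V \xrightarrow{\sim} G'$ with $\Phi(\overline{u}\wedge\overline{v})=[u,v]$ for all $u,v\in G$. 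Suppose, for contradiction, that $x,y,z,w\in G$ satisfy \eqref{eqn4}. Applying $\Phi^{-1}$ to both sides gives in $\wedge^2 V$ the identity
\[
\overline{x}\wedge\overline{y} \;+\; \overline{z}\wedge\overline{w} \;=\; \sum_{k=1}^{m} i_k\,\overline{a}_{2k-1}\wedge\overline{a}_{2k} \;=:\; \omega_0 ,
\]
which is meaningful since $2m\le n$.

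Next I would compare ranks on the two sides. Recall that every $\omega\in\wedge^2 V$ has a well-defined rank $r(\omega)$ — the least number of decomposable bivectors whose sum is $\omega$ — and that $2\,r(\omega)$ is the (necessarily even) rank of the skew-symmetric matrix representing $\omega$ in any basis. Using the elementary facts that $r(\overline{u}\wedge\overline{v})\le 1$ for all $u,v$ and that $r$ is sub-additive, the left-hand side has rank at most $2$. On the other hand, in the basis $\overline{a}_1,\dots,\overline{a}_n$ the matrix of $\omega_0$ is block-diagonal, consisting of the $m$ blocks $\left(\begin{smallmatrix}0 & i_k\\ -i_k & 0\end{smallmatrix}\right)$ — each invertible because $i_k\not\equiv 0\pmod p$ — together with zeros in the remaining coordinates, so it has rank $2m$ and hence $r(\omega_0)=m$. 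Comparing the two sides forces $m\le 2$, contradicting the hypothesis $m\ge 3$, and the lemma follows.

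The one point that genuinely needs care is the identification $G'\cong\wedge^2 V$ and the fact that under it $a_G$ becomes the \emph{universal} alternating map on $V$ — equivalently, that $\overline{a}_1,\dots,\overline{a}_n$ is a basis of $V$ and the $b_{ij}$ are their pairwise wedges; but this is immediate from the presentation \eqref{eqn2}, just as in Lemma~\ref{lem4}. Beyond this bookkeeping I anticipate no real obstacle: once the identity has been moved into $\wedge^2 V$, the statement is precisely the elementary fact that a sum of two decomposable bivectors cannot have rank $\ge 3$. Alternatively one could avoid exterior algebra altogether and repeat the coordinate computation of Lemma~\ref{lem4} more or less verbatim — writing $x,y,z,w$ in the generators modulo $Z(G)$, expanding $[x,y][z,w]$ in terms of the $b_{ij}$, and matching coefficients — but that route is appreciably longer, so the rank argument seems the cleaner one.
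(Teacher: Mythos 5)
Your proof is correct, but it takes a genuinely different route from the paper's. The paper argues by reduction: assuming a solution exists, it notes that some exponent of $a_1$ in $x,y,z,w$ is nonzero mod $p$ (say in $x$), replaces $a_1$ by $x$ in the generating set, and passes to the quotient $G/N$ where $N$ is the normal subgroup generated by $x$, $a_2$ and their commutators with $G$; this quotient is again an Ito group of the same type on two fewer generators, the factor $[x,y]$ and the term $[a_1,a_2]^{i_1}$ die, and the surviving equation $[\overline{z},\overline{w}]=[\overline{a_3},\overline{a_4}]^{i_2}\cdots[\overline{a_{2m-1}},\overline{a_{2m}}]^{i_m}$ contradicts Lemma~\ref{lem4} (which the paper had proved by a direct coordinate computation). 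Your argument instead identifies $G'$ with $\wedge^2\bigl(G/\Z(G)\bigr)$ via the presentation \eqref{eqn2} and compares ranks of alternating bivectors: the left side is a sum of two decomposables, hence its skew-symmetric matrix has rank at most $4$, while the right side has rank $2m\geq 6$. All the ingredients you use (bilinearity of the commutator in class $2$, the $b_{ij}$ forming a basis of $G'=\Z(G)$, subadditivity of matrix rank, invertibility of the $2\times 2$ blocks when $i_k\not\equiv 0 \pmod p$) are sound over $\mathbb F_p$. Your approach buys several things: it is self-contained (Lemma~\ref{lem4} is just the one-decomposable case $m\geq 2>1$ of the same rank count, so you get both lemmas at once), it avoids the coordinate bookkeeping and the verification that $G/N\cong G_{n-2}$, and it generalizes with no extra work to products of $t$ commutators provided $m>2t$. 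What the paper's route buys is that it stays entirely inside elementary commutator calculus and reuses the already-established Lemma~\ref{lem4}. One cosmetic caveat: the statement's indexing is internally inconsistent in the paper ($G_n$ is generated by $n+1$ elements according to \eqref{eqn2}), but since $2m\leq n$ your argument is unaffected either way.
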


\begin{proof}
We also prove this lemma by the method of contradiction. Suppose that there exist $x$, $y$, $z$ and $w\in G$ satisfying equation (\ref{eqn4}). Let $x = a_1^{j_1}a_2^{j_2}\dots a_n^{j_n}$, $y = a_1^{k_1}a_2^{k_2}\dots a_n^{k_n}$, $z = a_1^{l_1}a_2^{l_2}\dots a_n^{l_n}$ and $w = a_1^{t_1}a_2^{t_2}\dots a_n^{t_n}$ \big(reading modulo $\Z(G)$\big). As $i_1 \neq 0 \pmod{p}$, at least one of $j_1$, $k_1$, $l_1$ and $t_1$ has to be non-zero modulo $p$. Without loss of generality, we take $j_1$ to be non-zero. So $\{x, a_2, a_3,\dots , a_n\}$ also forms a generating set for $G$. 

Let $N$ be the subgroup of $G$ generated by $x$, $a_2$ and their commutators with $G$. Easy to see that $N$ is a normal subgroup. Set $\overline{G} = G/N$. Now $\{\overline{a_3}, \overline{a_4},\dots , \overline{a_n}\}$ is a generating set for $\overline{G}$ and easy to see that $\overline{G} \cong G_{n-2}$ (as defined in \ref{eqn2}). In $\overline{G}$, equation \ref{eqn4} reduced to 
$$[\overline{z},\; \overline{w}] = [\overline{a_3},\; \overline{a_4}]^{i_2}\dots [\overline{a_{2m-1}},\; \overline{a_{2m}}]^{i_m},$$
where $3\leq m \leq \lfloor n/2 \rfloor$ and $i_k \neq 0 \pmod{p}$, for $k=2,3,\dots m$. This is not possible by Lemma \ref{lem4}. Thus there do not exist $x$, $y$, $z$ and $w\in G$ satisfying equation (\ref{eqn4}). This completes the proof.
\end{proof}

\section{Proof of Theorem\ref{th1}}

\begin{proof}
We know that $|P(G_r)| = 1$ for all $r\geq 1$, from \cite[Theorem B]{NY}. Therefore, we only need to prove the statement for $n\geq 2$. For given $n\geq 2$, consider $G= G_m$, with $m= n^2+n-3$. Note that $G$ is of nilpotency class $2$ and conjugate type $(1,\; p^m)$. Suppose that $G$ is generated by $a_1, a_2,\dots,a_m, a_{m+1}.$ Then consider the central subgroup $H$ as follows:
\begin{align*}
H=\langle & [a_1,\; a_2][a_3,\; a_4],\\
& [a_5,\; a_6][a_7,\; a_8], [a_5,\; a_6][a_9,\; a_{10}],\\
 & [a_{11},\; a_{12}][a_{13},\; a_{14}], [a_{11},\; a_{12}][a_{15},\; a_{16}], [a_{11},\; a_{12}][a_{17},\; a_{18}],\\
 & \vdots \\
 & [a_{\alpha+1},\; a_{\alpha+2}][a_{\alpha+3},\; a_{\alpha+4}], [a_{\alpha+1},\; a_{\alpha+2}][a_{\alpha+5},\; a_{\alpha+6}]\dots [a_{\alpha+1},\; a_{\alpha+2}][a_{\alpha+2n-1},\; a_{\alpha+2n}] \rangle;
\end{align*}
where $\alpha = (n-2)(n+1)$. 

\noindent Note that $|H| = p^{n(n-1)/2}$ and $\alpha + 2n = n^2+n-2 = m+1$.

Set $\overline{G}:=G/H$. Note that $[\overline{a_1},\; \overline{a_3}] \neq 1$ in $\overline{G}$. Thus $\overline{G}$ is non-abelian, in particular nilpotency class of $\overline{G}$ is $2$. Our aim is to show that $\overline{G}$ has exactlly two conjugacy class sizes and $|P(\overline{G})| = n$. We complete the proof in two steps.\\

\noindent {\bf Claim 1:} $\overline{G}$ has exactlly two conjugacy class sizes.
\newline It is sufficient to prove that each non-central element commutes only with its power (reading modulo center). Let $\overline{x},\; \overline{y} \in \overline{G}$ be such that neither is a power of the other. It is not difficult to see that $[x,\; y] \neq 1$ in $G$. Hence, if $[\overline{x},\; \overline{y}] = 1$ in $\overline{G}$, then $[x,\; y] \in H^*$. For a subgroup $H$ of $G$, by $H^*$ we denote the set of non-trivial elements of $H$. Suppose that
\begin{align*}
1\neq [x,\; y] = & ([a_1,\; a_2][a_3,\; a_4])^{i_{1,1}}\\
& ([a_5,\; a_6][a_7,\; a_8])^{i_{2,1}} ([a_5,\; a_6][a_9,\; a_{10}])^{i_{2,2}}\\
 & ([a_{11},\; a_{12}][a_{13},\; a_{14}])^{i_{3,1}} ([a_{11},\; a_{12}][a_{15},\; a_{16}])^{i_{3,2}} ([a_{11},\; a_{12}][a_{17},\; a_{18}])^{i_{3,3}}\\
& \vdots \\
& ([a_{\alpha+1},\; a_{\alpha+2}][a_{\alpha+3},\; a_{\alpha+4}])^{i_{n-1,1}} ([a_{\alpha+1},\; a_{\alpha+2}][a_{\alpha+5},\; a_{\alpha+6}])^{i_{n-1,2}}\dots\\
& ([a_{\alpha+1},\; a_{\alpha+2}][a_{\alpha+2n-1},\; a_{\alpha+2n}])^{i_{n-1,n-1}}.
\end{align*}
So at least one $i_{j,k}$ is non-zero $\pmod{p}$. After simplification, we get that
\begin{align*}
[x,\; y] = & [a_1,\; a_2]^{i_{1,1}}[a_3,\; a_4]^{i_{1,1}}\\
& [a_5,\; a_6]^{i_{2,1}+i_{2,2}}[a_7,\; a_8]^{i_{2,1}}[a_9,\; a_{10}]^{i_{2,2}}\\
 & [a_{11},\; a_{12}]^{i_{3,1}+i_{3,2}+i_{3,3}}[a_{13},\; a_{14}]^{i_{3,1}} [a_{15},\; a_{16}]^{i_{3,2}} [a_{17},\; a_{18}]^{i_{3,3}}\\
& \vdots \\
& [a_{\alpha+1},\; a_{\alpha+2}]^{i_{n-1,1}+i_{n-1,2}+\dots +i_{n-1,n-1}}[a_{\alpha+3},\; a_{\alpha+4}]^{i_{n-1,1}} [a_{\alpha+5},\; a_{\alpha+6}]^{i_{n-1,2}}\dots\\
& [a_{\alpha+2n-1},\; a_{\alpha+2n}]^{i_{n-1,n-1}}.
\end{align*}
Note that in right-hand side at least two commutator have non-zero power. Thus, by Lemma \ref{lem4}, $[x,\; y]\notin H^{\#}$ and consequently $[\overline{x},\; \overline{y}] \neq 1$ in $\overline{G}$. Hence $\overline{G}$ has exactlly two conjugacy class sizes.\\

\noindent {\bf Claim 2:} $|P(\overline{G})| = n.$
\newline Here our plan is to show that 
$$|\{|fiber(g)| \mid 1\neq g\in K(\overline{G})\}|=n.$$ 

\noindent First, note the following elementary facts:
\begin{enumerate}[(i)]
\item $K(\overline{G})=\overline{K(G)}$.
\item For any $h\in K(G)$, $|\overline{fiber(h)|} = \dfrac{|fiber(h)|}{|H|^2}= \dfrac{(p^2 - 1)p^{m^2+m+1}}{p^{n(n-1)}}$.
\end{enumerate}

\noindent The last equality in $(ii)$ is due to Lemma \ref{lem3}.\\

For each $h\in K(G)$, set $A_h := K(G) \cap hH.$ We claim that for each $x\in A_h$, 
\begin{equation}\label{eqn4.4}
fiber(\overline{x}) = fiber(\overline{h}) = \cup_{y \in A_h} \overline{fiber(y)}.
\end{equation}

\noindent The first equality of \eqref{eqn4.4} holds because of the fact: $x\in A_h \Rightarrow \overline{x} = \overline{h}$ in $\overline{G}$. Now, we proceed to show the second equality of \eqref{eqn4.4}.  

Suppose $(\overline{a},\; \overline{b}) \in fiber(\overline{h})$, i.e., $[\overline{a},\; \overline{b}] = \overline{h}$. Then there exist some $y\in hH$ such that $[a,\; b] = y$, i.e., $(a,\; b) \in fiber(y)$. This implies $(\overline{a},\; \overline{b}) = \overline{(a,\; b)} \in \overline{fiber(y)}$. By definition of $A_h$, $y\in A_h$. Hence $fiber(\overline{h}) \subseteq \cup_{y \in A_h} \overline{fiber(y)}.$ Similarly reverse inclusion can be shown by backtracking the above steps. This completes the proof of second equality of \eqref{eqn4.4} and hence \eqref{eqn4.4} holds true.\\

Now, for $h\in K(G)$ and $x\in A_h$, we get 
$$\mid fiber(\overline{x})\mid\; =\; \mid A_h\mid\dfrac{(p^2 - 1)p^{m(m-1)/2}}{p^{n(n-1)}}.$$
 
\noindent Hence, to show $|\{\mid fiber(\overline{x})\mid\;\; \mid\; 1\neq g\in K(\overline{G})\}|=n$, it is sufficient to show that 
$$|\{\mid A_h \mid\;\; \mid\; 1\neq h \in K(G)\}|=n.$$

Note that $h \in A_h$, for all $h\in K(G)$. Now fix some $h\in K(G)\setminus 1$. If $h\neq g \in A_h$, then 
$$gh^{-1}\;=\;h',\;\; \mbox{for some}\;\; 1\neq h'\in H.$$ 
As both $g,\; h^{-1}\in K(G)$, consider $g = [a,\; b]$ and $h^{-1} = [c,\; d]$, for some $a,\; b,\; c,\; d\in G$. Thus, we get
$$[a,\; b][c,\; d] = h'.$$

\noindent By Lemma \ref{lem5} and presentation of $H$, this is only possible when $h'$ is of the form 
$$h' = [a_{2i-1},\; a_{2i}]^{\alpha} [a_{2j-1},\; a_{2j}]^{\alpha}\;\; \mbox{or}\;\; h' = [a_{2i-1},\; a_{2i}]^{\alpha} [a_{2j-1},\; a_{2j}]^{-\alpha},$$ 
for some suitable index $i$, $j$ and integer $1\leq\alpha\leq p-1$, with either $g = [a_{2i-1},\; a_{2i}]^{\alpha}$ and $h^{-1} = [a_{2j-1},\; a_{2j}]^{\pm\alpha}$ or vice versa. 

Using this observation, presentation of $H$, Lemma \ref{lem4} and Lemma \ref{lem5}, we proceed to compute $A_h$, for all $h\in K(G)\setminus 1$. First we partition $K(G)\setminus 1$ as $K(G)\setminus 1 = K(G)_1 \cup K(G)_2;\;$ where 
$$K(G)_1=\{[a_{2i-1},\;a_{2i}]^\beta\;\; \mid\; 1\leq \beta \leq p-1,\; 1\leq i \leq (m+1)/2\}$$ and $K(G)_2 = (K(G)\setminus 1) \setminus K(G)_1$. \\

Note that if $h \in K(G)_2$, then $A_h=\{h\}$. For $h \in K(G)_1$, we get the following expressions for $A_h$. For simplification of notation, now onwards, we denote $[a_i, a_j]$ by $h_{i,j}$ in this proof.
\begin{align*}
& A_{{h}_{1,2}^{k_1}} = \{ h_{1,2}^{k_1},\; h_{3,4}^{-k_1}\},\\
& A_{{h}_{5,6}^{k_2}} = \{ h_{5,6}^{k_2},\; h_{7,8}^{-k_2},\; h_{9,10}^{-k_2}\},\\
& A_{{h}_{11,12}^{k_3}} = \{ h_{11,12}^{k_3},\; h_{13,14}^{-k_3},\; h_{15,16}^{-k_3},\; h_{17,18}^{-k_3}\},\\
\vdots \\
& A_{{h}_{\alpha+1,\alpha+2}^{k_{n-1}}} = \{ h_{\alpha+1,\alpha+2}^{k_{n-1}},\; h_{\alpha+5,\alpha+6}^{-k_{n-1}},\; h_{\alpha+3,\alpha+4}^{-k_{n-1}},\dots h_{\alpha+2n-1,\alpha+2n}^{-k_{n-1}}\},
\end{align*}
for all $1\leq k_i \leq p-1$, $i=1,2\dots n-1$.\\

So, we get that $\{\mid A_h \mid\;\; \mid\; 1\neq h \in K(G)\}\;=\;\{1,2,\dots,n\}$. 
Hence $$|P(\overline{G})|\; =\; |\{|fiber(g)| \mid 1\neq g\in K(\overline{G})\}|\; =\; |\{|A_h| \mid 1\neq h \in K(G)\}|\;=\;n.$$
\end{proof}

\section{Proof of Theorem \ref{th2}}
We start with a elementary result which follows from \cite[Lemmas $3.4$, $3.14$ and $3.19$]{NYK} and definition of conjugate type.
\begin{lemma}\label{lem6}
Let $G$ be a stem $p$-group of class $3$ with conjugate type $(1, p^{2n})$. Then the following holds true.
\begin{enumerate}[(i)]
\item $|Z(G)|=p^{2n}$, $|G'\; :\; Z(G)|=p^n$ and $|G\; :\; G'|=p^{2n}$.
\item For each $g\in G\setminus Z(G),\;$ $|g^G|=[G\;:\;C_G(x)]=p^{2n}$ and $|C_G(x)\; :\; Z(G)| = p^n$.
\end{enumerate}
\end{lemma}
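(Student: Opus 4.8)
The plan is to reduce part (ii) to part (i): once the three orders in (i) are known, the second assertion becomes purely formal, so that all the genuine work lies in computing $|Z(G)|$, $|G':Z(G)|$ and $|G:G'|$.

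I would dispose of part (ii) first. For any $g\in G\setminus Z(G)$ the equality $|g^G|=[G:C_G(g)]$ is the orbit--stabiliser relation, and $|g^G|=p^{2n}$ is exactly the hypothesis that $G$ has conjugate type $(1,p^{2n})$. Since $Z(G)\le C_G(g)$, I factor $[G:Z(G)]=[G:C_G(g)]\,[C_G(g):Z(G)]$. Granting part (i), $[G:Z(G)]=[G:G']\,[G':Z(G)]=p^{2n}\cdot p^{n}=p^{3n}$, and therefore $[C_G(g):Z(G)]=p^{3n}/p^{2n}=p^{n}$. Thus (ii) follows from (i) with no further input.

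For part (i) I would begin by recording the two containments $\gamma_3(G)\le Z(G)\le G'$: the first because $[\gamma_3(G),G]=\gamma_4(G)=1$ in a class-$3$ group, the second because $G$ is a stem group. Since the class is exactly $3$ we have $G'\neq\gamma_3(G)$, so $G/\gamma_3(G)$ has class exactly $2$. Commutation then induces an alternating bilinear pairing $b\colon G/G'\times G/G'\to G'/\gamma_3(G)$, well defined modulo $\gamma_3(G)$ by the standard commutator identities; its radical is $C/G'$, where $C=\{x\in G:[x,G]\le\gamma_3(G)\}$, so $b$ descends to a nondegenerate alternating form on $G/C$ and $[G:C]$ is an even power of $p$. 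Reading the rank of $b$ off the class size via $|x^G|=|[x,G]|=p^{2n}$ for non-central $x$, together with the induced second-layer pairing $G/G'\times G'/\gamma_3(G)\to\gamma_3(G)$ and the stem condition $Z(G)\le G'$, should pin down $|G:G'|=p^{2n}$, $|G':Z(G)|=p^{n}$ and $|Z(G)|=p^{2n}$ (hence $|G|=p^{5n}$). These are precisely the structural facts recorded in \cite[Lemmas 3.4, 3.14 and 3.19]{NYK}, which I would invoke for the exact bookkeeping and then simply assemble.

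The main obstacle is contained entirely in part (i): showing that both commutator layers --- the alternating form on $G/G'$ and the induced pairing $G/G'\times G'/\gamma_3(G)\to\gamma_3(G)$ --- are governed by the single integer $n$, so that $[G:G']=p^{2n}$ while $[G':Z(G)]=p^{n}$, and that the stem hypothesis forces $|Z(G)|$ to be exactly $p^{2n}$ rather than a larger multiple of $|\gamma_3(G)|$. This is exactly where the evenness of the conjugate-type exponent (the reason class-$3$ examples occur only as $(1,p^{2n})$) genuinely enters, and it is the step for which I would rely on the classification in \cite{NYK} rather than attempt a self-contained count.
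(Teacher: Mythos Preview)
Your proposal is correct and matches the paper's own treatment: the paper does not prove this lemma directly but simply states that it follows from \cite[Lemmas~3.4, 3.14 and 3.19]{NYK} together with the definition of conjugate type, which is exactly what you do---citing \cite{NYK} for the orders in (i) and deducing (ii) from (i) via orbit--stabiliser. Your added sketch of the bilinear-form heuristic for (i) is extra motivation not present in the paper, but since you explicitly defer the bookkeeping to \cite{NYK}, the two arguments are effectively identical.
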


Following technical result on finite $p$-groups of sonjugate type $(1,\;p^{2n})$ and nilpotency class $3$ follows from \cite[Lemmas $5.8$, $5.12$, $5.13$ and $5.14$]{NYK}.
\begin{lemma}\label{lem9}
Let $G$ be a stem $p$-group of class $3$ with conjugate type $(1, p^{2n})$. Then there exists a generating set of $G$, say $\{a_1, a_2 \dots, a_n, b_1, \dots, b_n\}$ such that the following hold true.
\newline (i) $G'=\langle h_1,\dots,h_n,Z(G)\rangle$, where $h_i = [a_1, b_i]$, for $i=1\dots n$;
\newline (ii) $Z(G)=\langle z_1,z_2,\dots,z_{2n}\rangle$, where $[h_1, a_i]=z_i$ and $[h_1, b_i]=z_{n+i}$, for $i=1\dots n$.
\newline (iii) Say $A= \{ \prod_{i=1}^n a_i^{k_i}: 0\leq k_i\leq p-1\}$ and $B= \{ \prod_{i=1}^n b_i^{l_i}: 0\leq l_i\leq p-1\}$.
\begin{enumerate}
\item $x,y \in A$ or $x,y \in B \Rightarrow [x,\;y]\in Z(G)$. 
\item $1\neq x\in A$ and $1\neq y\in B \Rightarrow [x,\;y]\notin Z(G)$.
\item For any $1\neq a\in A$, $\{[a,\; b_i]: 1\leq i \leq n\}$ generates $G'$ over $Z(G)$.
\item For any $1\neq b\in B$, $\{[a_i,\; b]: 1\leq i \leq n\}$ generates $G'$ over $Z(G)$.
\end{enumerate}
(iv) Say $Z_1 = \langle z_1, \dots, z_n\rangle$ and $Z_2 = \langle z_{n+1}, \dots, z_{2n}\rangle$. Then for any $1\neq a\in A$, $1\neq b\in B$ and $h\in G'\setminus Z(G)$, 
\begin{enumerate}
\item $[h,\;A] = Z_1 = [a,\; G']$.
\item $[h,\;B] = Z_2 = [b,\; G']$.
\end{enumerate}
(v) For each $h\in G'-Z(G)$, $[h,\; G]=Z(G)$.   
\end{lemma}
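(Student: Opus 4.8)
The plan is to read the entire statement off the structural classification of Naik, Kitture and Yadav, assembling \cite[Lemmas $5.8$, $5.12$, $5.13$, $5.14$]{NYK} into the single package stated here and supplying the bookkeeping that glues the pieces together via the order data of Lemma \ref{lem6}. First I would invoke \cite[Lemma $5.8$]{NYK} to fix a generating set $\{a_1,\dots,a_n,b_1,\dots,b_n\}$ realising the normal form of a stem group in this isoclinism class, and I would take the elements $h_i=[a_1,b_i]$, $z_i=[h_1,a_i]$ and $z_{n+i}=[h_1,b_i]$ as defined from it. The ambient fact used throughout is that $G$ has class $3$: writing $G_3=[G',G]$ we have $[G_3,G]=1$, so $G_3\leq \Z(G)$, and hence every commutator $[h,g]$ with $h\in G'$ (in particular each $z_i$) lies in $\Z(G)$.

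For (i) and (ii) I would argue by orders. Lemma \ref{lem6} gives $|G':\Z(G)|=p^n$ and $|\Z(G)|=p^{2n}$, so it suffices to check that $h_1,\dots,h_n$ are independent modulo $\Z(G)$ and that $z_1,\dots,z_{2n}$ are independent in $\Z(G)$; both independence statements are exactly what \cite[Lemmas $5.8$, $5.12$]{NYK} record, and the matching cardinalities force the listed sets to generate $G'$ over $\Z(G)$ and to generate $\Z(G)$ respectively. Setting $Z_1=\gen{z_1,\dots,z_n}$ (the $a$-side) and $Z_2=\gen{z_{n+1},\dots,z_{2n}}$ (the $b$-side), the fact that all $z_i$ together generate $\Z(G)$ gives $Z_1Z_2=\Z(G)$, which I will use in (v).

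Part (iii) is where the pairing enters. The commutator induces the map $a_G\colon \overline{G}\times\overline{G}\to G'$ of Section $2$, and reducing the target modulo $\Z(G)$ turns it into an alternating bilinear form valued in $G'/\Z(G)$, since $[xy,z]\equiv[x,z][y,z]\pmod{\Z(G)}$ by the class-$3$ hypothesis. Claim (iii)(1) says the $A$-block and the $B$-block are each isotropic, and (iii)(2) says $A$ and $B$ pair non-degenerately off $\Z(G)$; both are recorded in \cite[Lemma $5.13$]{NYK}. For (iii)(3) I would fix $1\neq a\in A$ and use biadditivity modulo $\Z(G)$ to see that $y\mapsto [a,y]\,\Z(G)$ is additive on $B$ with image generated by the $n$ classes $[a,b_i]\,\Z(G)$; by (iii)(2) this map has trivial kernel, so its image has order $p^n=|G':\Z(G)|$ and the $[a,b_i]$ therefore generate $G'$ over $\Z(G)$. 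The symmetric argument gives (iii)(4). For (iv) the point is that commutators of $G'$ with the $a$-side land in $Z_1$ and with the $b$-side in $Z_2$, and non-degeneracy makes these exhaustive; this is \cite[Lemma $5.14$]{NYK}, which yields $[h,A]=Z_1=[a,G']$ and $[h,B]=Z_2=[b,G']$. Finally (v) follows by combining the two halves of (iv): for $h\in G'\setminus\Z(G)$ the map $g\mapsto[h,g]$ is a homomorphism $G\to \Z(G)$ (its values already lie in $\Z(G)$, so $[h,g_1g_2]=[h,g_1][h,g_2]$), whence $[h,G]$ is the subgroup it generates; that subgroup contains $[h,A]=Z_1$ and $[h,B]=Z_2$, so $[h,G]=Z_1Z_2=\Z(G)$.

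The main obstacle is not any single computation but the translation step: matching the normalised generators above, and the special role forced on $a_1$ and $h_1$ (so that $h_i=[a_1,b_i]$ and $z_i=[h_1,a_i]$), to the conventions of \cite{NYK}, and then verifying that the non-degeneracy recorded in (iii)(2) and in \cite[Lemma $5.14$]{NYK} is strong enough to upgrade ``the relevant commutators generate a subgroup of the correct order'' to ``they generate exactly $Z_1$, $Z_2$, or $G'/\Z(G)$.'' Once these identifications are pinned down, every clause of the lemma reduces either to a cited result of \cite{NYK} or to biadditivity of the commutator modulo the appropriate term of the lower central series.
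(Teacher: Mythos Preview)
Your proposal is correct and follows exactly the paper's approach: the paper gives no proof of this lemma beyond the one-line attribution ``follows from \cite[Lemmas $5.8$, $5.12$, $5.13$ and $5.14$]{NYK},'' and you invoke precisely the same four lemmas, merely spelling out the order-counting and biadditivity bookkeeping that the paper leaves implicit.
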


Set $\overline{G}=G/Z(G)$ and consider $\Phi$ to be the canonical homomorphism from $G$ to $\overline{G}$. For $g\in G\setminus G'$, define
\begin{equation}\label{eqn5}
H_g = \Phi^{-1}(C_{\overline{G}}(\overline{g})).
\end{equation}
Note that $$H_g = C_g(G)G'.$$ As $C_g(G) \cap G' = Z(G)$ and $[G_g(G)\; :\; Z(G)] = p^n = [G'\; :\; Z(G)]$; therefore $[H_g\; :\; Z(G)] = p^{2n}$.
\begin{lemma}\label{lem7}
Let $g\in G\setminus G'$. Then for any $x\in H_g\setminus G'$, $[G',\;x]=[G',\;g]$.
\end{lemma}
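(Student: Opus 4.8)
\textbf{Proof proposal for Lemma \ref{lem7}.}

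The plan is to exploit the structure theory of Lemma \ref{lem9} together with the definition of $H_g$. Fix $g\in G\setminus G'$ and $x\in H_g\setminus G'$. First I would reduce to understanding $[G', x]$ in terms of $\overline{x}=\Phi(x)$. Since $x\in H_g=C_g(G)G'$, we can write $x = cw$ with $c\in C_g(G)$ and $w\in G'$; because $G'$ is abelian (the class is $3$ but $G'$ has class... actually one checks $[G',G']\le Z(G)$ from part (ii), and more precisely $G'$ is abelian since $\gamma_3(G)\le Z(G)$ forces $[G',G']=1$), commuting an element of $G'$ with $w\in G'$ is trivial, so $[G',x]=[G',c]$. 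Now $c\in C_g(G)$ means $\overline{c}$ and $\overline{g}$ lie in the same coset structure; in fact $\overline{H_g}=C_{\overline G}(\overline g)$, and by Lemma \ref{lem6}(ii) this centralizer has order $p^{2n}$ with $\overline{G'}$ (order $p^n$) sitting inside it, while $\overline{c}\overline{G'}$ generates the quotient $C_{\overline G}(\overline g)/\overline{G'}$ together with $\overline g\overline{G'}$ — both are cyclic of order $p^n$... here I need to be careful and instead argue directly.

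The cleaner route: I would show $[G',x]=Z(G)$ for \emph{every} $x\in H_g\setminus G'$, so that in particular $[G',x]=Z(G)=[G',g]$, giving the claim without any coset bookkeeping (note $g\in G\setminus G'$ itself lies in $H_g\setminus G'$ since $g\in C_g(G)\subseteq H_g$, so once the universal statement is proved it applies to $g$ too). To prove $[G',x]=Z(G)$: using the generators of Lemma \ref{lem9}, write $\overline{x}$ in terms of the images of $a_1,\dots,a_n,b_1,\dots,b_n$ modulo $\overline{G'}$. Since $x\notin G'$, the $A$-part or the $B$-part of $\overline x$ (modulo $G'$) is nontrivial. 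If the $A$-part is nontrivial, say $x\equiv a\cdot(\text{$B$-stuff})\pmod{G'}$ with $1\neq a\in A$, then for any $h\in G'\setminus Z(G)$ we have $[h,x] = [h,a][h,\text{$B$-stuff}]\pmod{\text{higher}}$, and by Lemma \ref{lem9}(iv) $[h,A]=Z_1$ and $[h,B]=Z_2$, so $[h,x]$ ranges over $Z_1$ (and/or $Z_2$) as $h$ ranges over $G'\setminus Z(G)$; combining the contributions from suitable $h$ and using that $[h,Z(G)]=1$, one gets $[G',x]\supseteq\langle Z_1,Z_2\rangle = Z(G)$, and the reverse inclusion is Lemma \ref{lem9}(v) (namely $[h,G]=Z(G)$ for $h\in G'\setminus Z(G)$, hence $[G',x]\subseteq[G',G]\subseteq Z(G)$). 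The symmetric argument handles the case where the $B$-part is nontrivial. Finally, I should double-check the degenerate possibility that $\overline x\in\overline{G'}$, i.e.\ $x\in G'$; but that is excluded by hypothesis $x\in H_g\setminus G'$, so the $A$-part or $B$-part is genuinely nontrivial.

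The main obstacle I anticipate is the commutator-expansion step: because $G$ has class $3$, the identity $[h,uv]=[h,v][h,u]^v$ does not instantly collapse, so I must verify that the correction terms $[h,u]^v[h,u]^{-1}\in\gamma_3(G)\le Z(G)$ do not spoil the computation of the \emph{set} $[G',x]$ — in fact they only translate by central elements, which is harmless since we are computing a subgroup, but writing this carefully (including that $[G',x]$ is actually a subgroup, using $G'$ abelian and $[G',x]\le Z(G)$ central) is the delicate bookkeeping. A secondary point is making sure the generation statements in Lemma \ref{lem9}(iv) are applied to an element $a$ (resp.\ $b$) that is genuinely the nontrivial $A$-part (resp.\ $B$-part) of $x$ rather than of $g$, which is why I route through the universal statement $[G',x]=Z(G)$ rather than trying to match $x$ against $g$ directly.
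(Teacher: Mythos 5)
Your ``cleaner route'' rests on a false statement: it is not true that $[G',x]=Z(G)$ for every $x\in H_g\setminus G'$. For a single element $x$, the map $h\mapsto [h,x]$ is a homomorphism from $G'$ into $Z(G)$ (triple commutators are central and $G'$ is abelian, as you note) whose kernel contains $Z(G)$, so $|[G',x]|\leq [G':Z(G)]=p^n$, whereas $|Z(G)|=p^{2n}$. Concretely, Lemma \ref{lem9}(iv) itself says $[a,G']=Z_1$ for $1\neq a\in A$, a subgroup of order $p^n$; taking $g=x=a_1$ gives $x\in H_g\setminus G'$ with $[G',x]=Z_1\neq Z(G)$. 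The step where you ``combine the contributions from suitable $h$'' to get $[G',x]\supseteq\langle Z_1,Z_2\rangle$ is exactly where the argument breaks: the $Z_1$-component $[h,a]$ and the $Z_2$-component $[h,b]$ are produced by the \emph{same} $h$, so as $h$ varies you sweep out a diagonal subgroup of $Z_1\times Z_2$ of order at most $p^n$, not the full product. (Lemma \ref{lem9}(v), which gives $[h,G]=Z(G)$, requires commutating with all of $G$, not with one fixed $x$.) A further warning sign is that your argument never uses the hypothesis $x\in H_g$; the conclusion genuinely depends on the relation between $x$ and $g$, since $[G',a_1]=Z_1$ and $[G',b_1]=Z_2$ are different subgroups of $Z(G)$.

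The paper's proof is short and goes the other way: an element of $[G',x]$ is written as $[[g,y],x]$, and since $x\in H_g$ forces $[x,g]\in Z(G)$, the Hall--Witt identity collapses to $[[g,y],x]=[[y,x],g]\in[G',g]$, giving $[G',x]\subseteq[G',g]$, with the reverse inclusion by symmetry. Your opening reduction $[G',x]=[G',c]$ with $c\in C_G(g)$ (using that $G'$ is abelian) is correct and compatible with that argument, but you then abandoned the comparison with $g$ in favor of the false universal claim, so the proposal as it stands does not prove the lemma.
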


\begin{proof}
Take an arbitrary element of $[G',\; x]$, say $[[g,\;y],\;x]$, for some $y\in G.$ As $x\in H_g,\;$ so $[x,\;g]\in Z(G)$. Then using Hall-Witt identity, we get $[[g,\;y],\;x]=[[y,\;x],\;g]$ and so $[G',\;x]\subseteq[G',\;g]$. Similarly we can show the reverse inclusion and hence $[G',\;x]=[G',\;g]$.
\end{proof}

\begin{lemma}\label{lem8}
Given any $h\in G'\setminus Z(G)$ and any $ab\neq 1$, for some $a\in A$ and $b\in B$, there always exists some $h^*\in G'$ such that $h\in [abh^*,\; G]$.
\end{lemma}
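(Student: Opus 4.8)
\textbf{Proof proposal for Lemma~\ref{lem8}.}

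The plan is to work inside the structure furnished by Lemma~\ref{lem9} and reduce the problem to a statement purely about $G'/Z(G)$ and the commutator pairing. Write $g = abh^*$ and observe that, since $g \notin G'$ (as $ab \neq 1$, so $g$ has a nontrivial component outside $G'$ modulo $Z(G)$), we have $[g, G'] = [ab, G'] = [a, G'][b, G']$. By Lemma~\ref{lem9}(iv), if $a \neq 1$ then $[a, G'] = Z_1$, and if $b \neq 1$ then $[b, G'] = Z_2$; in all cases with $ab \neq 1$ we therefore get $[g, G'] \supseteq Z_1$ or $[g, G'] \supseteq Z_2$, and in fact $[abh^*, G'] = [ab, G']$ is a subgroup of $Z(G)$ of order $p^n$ (it equals $Z_1$, $Z_2$, or a ``diagonal'' subgroup when both $a,b \neq 1$). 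The key point is that $h \in [abh^*, G]$ is a condition on the coset $hZ(G) \in G'/Z(G)$ together with a correction in $Z(G)$: first I would choose the $G'$-part of the target correctly, then absorb the $Z(G)$-discrepancy using the freedom in $h^*$.

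The main steps, in order, are as follows. First, since $[abh^*, G]$ contains $[abh^*, G']$, which for any choice of $h^* \in G'$ is a fixed subgroup $W$ of $Z(G)$ of order $p^n$ (independent of $h^*$, because $[G', G'] \le Z(G)$ implies $[abh^*, y] \equiv [ab, y] \pmod{[G',G']}$ and in fact the relevant commutators with $G'$ are unchanged), I reduce to: find $h^*$ so that $h$ lies in the coset $h_0 W$ where $h_0$ is \emph{any} fixed element of $[abh^*, G]$ lying over $hZ(G)$. Second, I show such an $h_0$ exists: because $[G' : Z(G)] = p^n$ and for $1 \neq a \in A$ the commutators $\{[a, b_i]\}$ generate $G'$ over $Z(G)$ (Lemma~\ref{lem9}(iii)), the map $G \to G'/Z(G)$, $y \mapsto [abh^*, y]Z(G)$ is surjective — so some $y_0$ gives $[abh^*, y_0] \equiv h \pmod{Z(G)}$, i.e. $[abh^*, y_0] = h w_0$ for some $w_0 \in Z(G)$. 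Third, and this is where $h^*$ earns its keep: I must kill $w_0$. Replacing $h^*$ by $h^* s$ for $s \in G'$ changes $[abh^*, y_0]$ by a controlled element of $Z(G)$ — expand $[abh^* s, y_0]$ via the commutator identity $[uv, y] = [u,y]^v [v,y]$ to get $[abh^*, y_0]^s [s, y_0] = [abh^*, y_0][s,y_0]$ modulo higher terms, so the discrepancy shifts by $[s, y_0] \in [G', G] = [G', G]$. By Lemma~\ref{lem9}(v), $[h, G] = Z(G)$ for every $h \in G' \setminus Z(G)$, so as $s$ ranges over $G' \setminus Z(G)$ the elements $[s, y_0']$ (ranging also over $y_0' \in G$) cover $Z(G)$; hence I can pick $h^*$ (equivalently, adjust by a suitable $s$) and a possibly-updated $y_0$ so that the $Z(G)$-error is exactly cancelled, giving $[abh^*, y_0] = h$, hence $h \in [abh^*, G]$.

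The main obstacle is Step three: the correction by $h^*$ and the correction by the choice of $y_0$ interact, since changing $h^*$ also changes which $y_0$ realizes $h$ modulo $Z(G)$, and I must be careful that the two corrections can be made simultaneously rather than chasing each other. The clean way around this is to fix the $G'/Z(G)$-part first (choose $y_0$ with $[abh^*, y_0] \equiv h \bmod Z(G)$ for the initial $h^*$, say $h^* = 1$), record the error $w_0 \in Z(G)$, and then solve $[s, y_0] = w_0^{-1}$ for $s \in G'$ with $y_0$ held fixed — this is possible because $[G', y_0]$, for $y_0$ chosen outside $C_G(h_1)$ say, is large in $Z(G)$; if $[G', y_0]$ happens not to be all of $Z(G)$ one instead uses that $h \in [abh^*, G]$ only needs \emph{one} witness and re-chooses $y_0$ within its $Z(G)$-coset, using $[abh^*, y_0 z] = [abh^*, y_0][abh^*, z]$ with $z \in Z(G)$ — but $[abh^*, z] = 1$, so that does not help, and the genuine fix is to combine two witnesses $y_0, y_1$ with $[abh^*, y_0 y_1]$ spanning a coset of $[abh^*, G']$ in the right place. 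I expect a short calculation using Lemma~\ref{lem9}(ii)–(v) and Lemma~\ref{lem7} to close this gap cleanly, since those give explicit generators $z_i$ of $Z(G)$ as commutators $[h_1, a_i], [h_1, b_i]$, which is exactly the surjectivity onto $Z(G)$ needed to absorb $w_0$.
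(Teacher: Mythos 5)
Your overall strategy --- first arrange $[abh^*,\,y_0]\equiv h \pmod{Z(G)}$ and then absorb the residual error $w_0\in Z(G)$ --- has the right shape, and your Step 2 (surjectivity of $y\mapsto [ab,\,y]Z(G)$ onto $G'/Z(G)$ via Lemma \ref{lem9}(iii)) is fine. But Step 3, which you yourself flag as the obstacle, contains a genuine gap that none of your proposed fixes closes. The point is quantitative: $|Z(G)|=p^{2n}$ while $|G'/Z(G)|=p^n$, so every single correction mechanism you invoke has image of order at most $p^n$ and therefore cannot absorb an arbitrary $w_0\in Z(G)$. Concretely, for fixed $y_0$ the map $s\mapsto [s,\,y_0]$ is a homomorphism into $Z(G)$ that kills $Z(G)$, so $[G',\,y_0]$ has order at most $p^n$ and the equation $[s,\,y_0]=w_0^{-1}$ need not be solvable; varying $h^*$ changes $[abh^*,\,y_0]$ only by $[h^*,\,y_0]\in[G',\,y_0]$, again at most $p^n$ values; and replacing the witness $y_0$ by $y_0s$ with $s\in G'$ changes the value only by $[ab,\,s]\in[ab,\,G']$, once more of order at most $p^n$. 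Your suggestion to let $y_0$ range over all of $G$ while $s$ varies destroys the congruence $[abh^*,\,y_0]\equiv h \pmod{Z(G)}$, and the ``combine two witnesses'' idea is left undeveloped (membership in $[abh^*,\,G]$ requires a single witness). As written, the proposal does not prove the lemma.

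The missing idea --- and the way the paper closes the argument --- is to split the error along the decomposition $Z(G)=Z_1Z_2$ of Lemma \ref{lem9}(iv) and to use two \emph{independent} correction mechanisms, one per factor. Take the witness of the special form $b^*h_1$ with $b^*\in B$ chosen so that $[a,\,b^*]\equiv h\pmod{Z(G)}$; the error is then $w_1w_2$ with $w_i\in Z_i$. Since $[a,\,G']=Z_1$, one picks $h_1\in G'$ with $[a,\,h_1]=w_1^{-1}$, and multiplying the witness by $h_1$ kills the $Z_1$-part at the cost of a new $Z_2$-term $[b,\,h_1]$. Then, since $[G',\,b^*]=Z_2$ and $[h^*,\,b^*h_1]=[h^*,\,b^*]$, the residual $Z_2$-error is killed by choosing $h^*$ with $[h^*,\,b^*]$ equal to its inverse. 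The two mechanisms together sweep out $Z_1\cdot Z_2=Z(G)$, which is exactly what no single one of your corrections can achieve.
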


\begin{proof}
Without loss of generality we can assume that $a\neq 1$. By Lemma \ref{lem9}$(iii)(3)$, there exists some $b^*\in B$ such that 
$$[a,\;b^*] = h \pmod{Z(G)}.$$ 
Now $[ab,\; b^*]=hw_1w_2$, for some $w_1\in Z_1$ and $w_2\in Z_2$. By Lemma \ref{lem9}$(iv)(1)$, there exists some $h_1\in G'$ such that $[a,\; h_1] = w_1^{-1}$. So 
$$[ab,\; b^*h_1]=[ab,\; b^*][a,\; h_1][b,\;h_1] = hw_1w_2w_1^{-1}[b,\;h_1] = hw_2w_3$$
 where $w_3 = [b,\;h_1]\in Z_2$ (by Lemma \ref{lem9}[$(iv)(2)$]). Again using Lemma \ref{lem9}$(iv)(2)$, we get some $h^*\in G'$ such that $[h^*,\; b^*] = (w_2w_3)^{-1}$. Therefore $$[abh^*,\; b^*h_1]=[ab,\; b^*h_1][h^*,\; b^*h_1]=hw_2w_3(w_2w_3)^{-1}=h.$$
\end{proof}

\noindent\textbf{Proof of Theorem \ref{th2}}:
\begin{proof}
Let $G\in \mathcal{G}_3$ be a finite $p$-group of conjugate type $(1, p^{2n})$. Then $|G| = p^{5n}$ and $[G\; :\; Z(G)] = p^{3n} = |G'|$.
By Lemma \ref{lem2}, we get 
$$Pr_1(G) = \dfrac{1}{p^{3n}} + \dfrac{1}{p^{2n}}\{1-\dfrac{1}{p^{3n}}\} = \dfrac{p^{3n}+p^{2n}-1}{p^{5n}}.$$
Note that
$$\sum_{g\in K(G)} Pr_g(G) = 1.$$ 
Thus
$$\sum_{g\in K(G)\setminus 1} Pr_g(G) = 1 - Pr_1(G) = \dfrac{p^{5n} - p^{3n} - p^{2n} + 1}{p^{5n}} = (p^{3n} - 1) \dfrac{p^{2n} - 1}{p^{5n}}.$$

\noindent \textbf{Claim:} $Pr_g(G)\geq \dfrac{p^{2n} - 1}{p^{5n}}$, for each $g\in G'\setminus 1$.
\newline By Lemma \ref{lem2}, it is sufficient to show that $|TZ_g|\geq p^{2n} - 1$, for all $g\in G'\setminus 1$.\\

\noindent Case $1$: $g\in Z(G)\setminus 1$. 
\newline Consider any $\alpha \in G'-Z(G)$. By Lemma \ref{lem9}$(v)$, there exist some $\beta \in G\setminus G'$ such that $[\alpha, \beta] = g$. Now consider $H_{\beta}$ (as defined in \ref{eqn5}). Take $x\in H_{\beta}\setminus Z(G)$, then either $x\in H_{\beta}\setminus G'$ or $x\in G'\setminus Z(G)$. If $x\in H_{\beta}\setminus G'$, then by Lemma \ref{lem7}, $g = [\alpha, \beta] \in [G' , \beta] = [G' , x]$. If $x\in G'\setminus Z(G)$, then by Lemma \ref{lem6}$(vii)$, $g \in [G' , x]$. So 
$$|TZ_g| \geq \dfrac{(H_{\beta}\setminus Z(G))Z(G)}{Z(G)} = p^{2n} - 1.$$

\noindent Case $2$: $g\in G'\setminus Z(G)$.  
\newline By Lemma \ref{lem8}, we get
$$|TZ_g| \geq |\{ab: a\in A, b\in B\; \mbox{and}\; ab\neq 1\}| = p^{2n}-1.$$

Hence our claim holds true. Now, by counting argument, we get $Pr_g(G) = \dfrac{p^{2n} - 1}{p^{5n}}$, for each $h\in G'\setminus 1$. This completes the proof.
\end{proof}

\noindent\textbf{Acknowledgement.} I sincerely thank my supervisor Dr. Manoj Kumar Yadav for fruitful discussions and carefully reading of the preprint.

\end{document}